\documentclass{article}

\usepackage{PRIMEarxiv}
\usepackage[numbers]{natbib}
\usepackage[utf8]{inputenc} 
\usepackage[T1]{fontenc}    
\usepackage{hyperref}       
\usepackage{url}            
\usepackage{booktabs}       
\usepackage{amsfonts}       
\usepackage{amsmath}
\usepackage{nicefrac}       
\usepackage{microtype}      
\usepackage{lipsum}
\usepackage{todonotes}
\usepackage{fancyhdr}       
\usepackage{graphicx}       
\usepackage{subcaption} 
\graphicspath{{media/}}     
\usepackage{placeins}
\usepackage{orcidlink}
\usepackage{algorithm}
\usepackage{algorithmic}
\usepackage[pagewise]{lineno}
\usepackage{mwe} 
\usepackage{multirow}

\usepackage{amsthm}

\newtheorem{proposition}{Proposition}

\title{Pruning for efficient deterministic global optimization over trained ReLU neural networks}

\author{
  Giacomo Lastrucci\orcidlink{0009-0005-3475-9351}\\
  Process Intelligence Research \\
  Department of Chemical Engineering \\
  Delft University of Technology \\
  Delft, The Netherlands\\
   \And
  Tanuj Karia\orcidlink{0009-0004-7720-2628} \\
  Process Intelligence Research \\
  Department of Chemical Engineering \\
  Delft University of Technology \\
  Delft, The Netherlands\\
   \And
  Victor Schulte\orcidlink{0009-0000-8475-4261} \\
  Process Intelligence Research \\
  Department of Chemical Engineering \\
  Delft University of Technology \\
  Delft, The Netherlands\\
   \And
  Dominik Bongartz\orcidlink{0000-0003-1790-0235} \\
  Department of Chemical Engineering \\
  KU Leuven \\
  Leuven, Belgium\\
   \And
  Artur M. Schweidtmann\orcidlink{0000-0001-8885-6847} \\
  Process Intelligence Research \\
  Department of Chemical Engineering \\
  Delft University of Technology \\
  Delft, The Netherlands\\
  \texttt{a.schweidtmann@tudelft.nl} \\
}

\begin{document}
\maketitle

\begin{abstract}
Neural networks are increasingly used as surrogates in optimization problems to replace computationally expensive models.
However, embedding ReLU neural networks in mathematical programs introduces significant computational challenges, particularly for deep and wide networks, due to both the formulation of the ReLU disjunction and the resulting large-scale optimization problem.
This work investigates how pruning techniques can accelerate the solution of optimization problems with embedded neural networks, focusing on the mechanisms underlying the computational gains.
We provide theoretical insights into how both unstructured (weight) and structured (node) pruning affect the ReLU big-M formulation, showing that pruning monotonically tightens preactivation bounds.
We conduct comprehensive empirical studies across multiple network architectures using an illustrative test function and a realistic chemical process flowsheet optimization case study.
Our results show that pruning achieves speedups of up to three to four orders of magnitude, with computational gains attributed to three key factors: (i) reduction in problem size, (ii) decrease in the number of integer variables, and (iii) tightening of big-M bounds.
Weight pruning is particularly effective for deep, narrow networks, while node pruning performs better for shallow, wide or medium-sized networks.
In the chemical engineering case study, pruning enabled convergence within seconds for problems that were otherwise intractable.
We recommend adopting pruning as standard practice when developing neural network surrogates for optimization, especially for engineering applications requiring repeated optimization solves.
\end{abstract}


\keywords{Pruning \and Constraint learning \and Global optimization \and Artificial Neural Networks}

\begin{figure}[h!]
    \centering
    \includegraphics[width=0.8\linewidth]{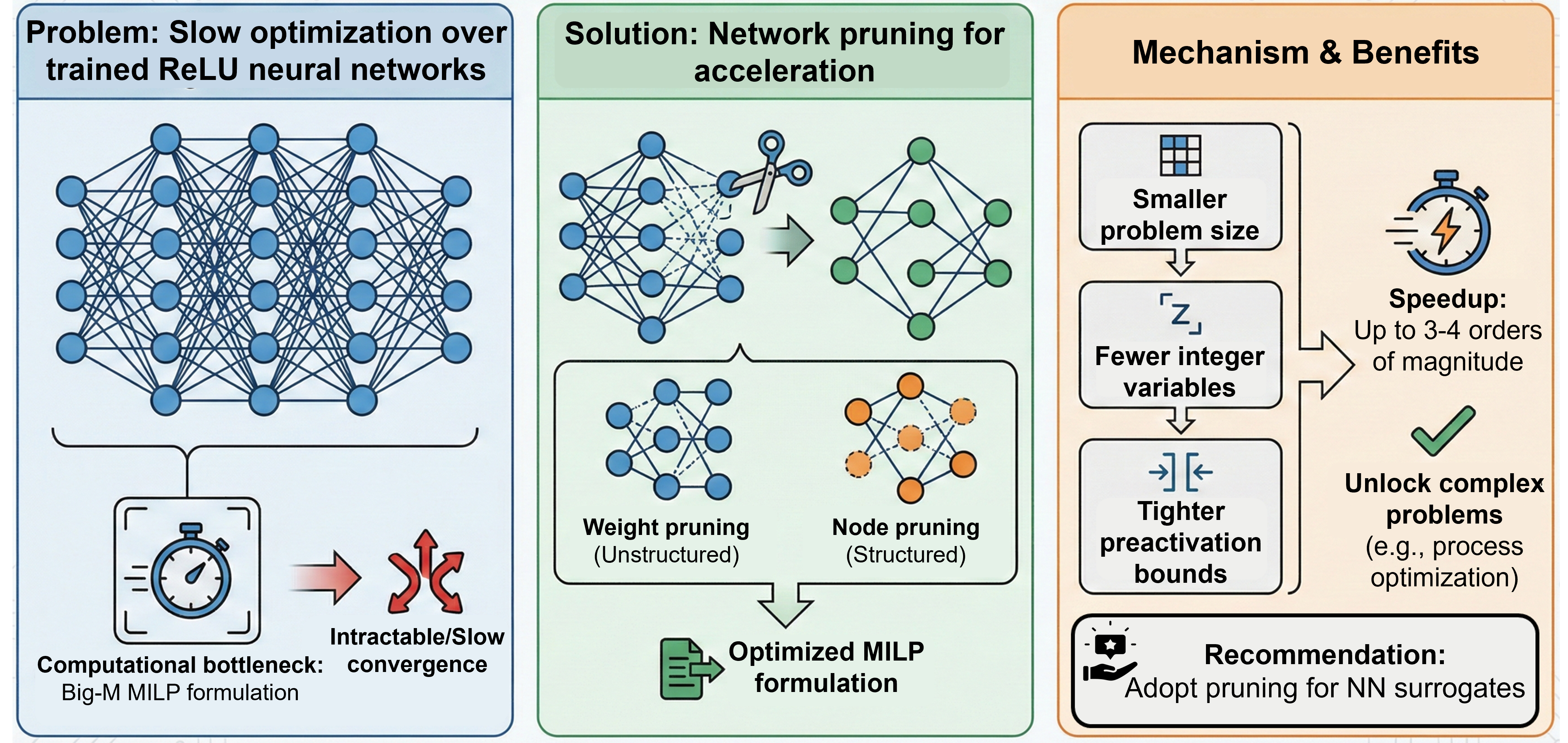}
    \caption{Graphical abstract.}
    \label{fig:graphical_abstract}
\end{figure}

\section{Introduction}
Neural networks (NNs) have been widely used as a surrogate model for mathematical optimization tasks in multiple branches of engineering~\cite{Schweidtmann2018_DeterministicGlobalOptimization, White2019_Multiscaletopologyoptimization, Fischetti2019_Machinelearningmeets, Sun2019_reviewartificialneural, Lastrucci2025_PicardKKThPINN}.
Surrogate modeling addresses the challenge of tractability by replacing complex, computationally expensive, or unknown mathematical models with simpler-to-evaluate approximations. 
In the anatomy of optimization problems, surrogate models can replace objective functions or constraints. 
In an optimization problem, a set of equality constraints can represent, for instance, physical laws expressed as differential-algebraic equations (DAEs). 
Replacing a set of constraints with a surrogate model is known in the literature as \textit{constraint learning}~\cite{Serra2017_BoundingCountingLinear, Donti2021_DC3learningmethod, Lastrucci2025_ENFORCENonlinearConstrained} and can improve the mathematical tractability, e.g., by avoiding domain discretization or decreasing the overall problem size.\\
Deterministic global optimization with NNs embedded requires expressing the neural network's internal computations as an algebraic system of (nonlinear) equations. Several software packages have been developed to automatically reformulate NNs for optimization, including JANOS~\cite{Bergman2019_JANOSIntegratedPredictive}, optiCL~\cite{Maragno2021_MixedIntegerOptimization}, reluMIP~\cite{reluMIP.2021}, OMLT~\cite{Ceccon2022_OMLTOptimization&}, and the MeLON interface to MAiNGO~\cite{Bongartz2018MAiNGO}, as well as solver-specific interfaces for GUROBI and SCIP~\cite{Turner2023_PySCIPOptMLEmbedding}. 
The nonlinearity of the neural network input–output mapping is determined by the choice of activation function. However, the nonlinearity of the resulting optimization formulation also depends on how the network is represented in the mathematical program.
While smooth, continuous activations such as GeLU are gaining popularity in the machine learning domain~\cite{Hendrycks2016_GaussianErrorLineara}, the rectified linear unit (ReLU) is still arguably one of the most popular choices for constraint learning due to two main factors: (i) efficient gradient propagation during training, yielding good approximation capabilities~\cite{nair2010rectified}, and (ii) piecewise linearity, which allows reformulating the network (being overall a piecewise linear function) as a mixed-integer linear program (MILP)~\cite{Huchette2023_WhenDeepLearning}. 
In fact, the most widely used approach for embedding ReLU networks in optimization problems is the big-M formulation, which provides a standard mechanism for modeling if–else statements in mathematical programming~\cite{Grimstad2019_ReLUnetworksasa}. The resulting MILP can then be solved to global optimality using deterministic solvers based on branch-and-bound (B\&B) algorithms~\cite{Grossmann2021_AdvancedOptimizationProcess}.
The drawback of this approach is the combinatorial nature of mixed-integer optimization problems~(MIPs) whose complexity grows rapidly with the number of neurons in the network.\\ 
The efficiency of solving the resulting MILP strongly depends on the strength of its linear programming (LP) relaxation. 
As a result, extensive research has focused on strengthening big-M formulations for ReLU networks, ranging from reformulations~\cite{Tsay2021_PartitionBasedFormulations}, to variable bounds tightening~\cite{Sosnin2024_ScalingMixedInteger}, and cutting planes~\cite{Anderson2020_Strongmixedinteger}. 
Recent studies have also examined additional factors in ReLU NNs affecting MILP solver performance, including the number of linear regions and stable neurons~\cite{Serra2017_BoundingCountingLinear,Plate2025_analysisoptimizationproblems}. 
Despite these advances, optimization problems with embedded ReLU networks remain computationally challenging, especially for deep and large networks.
A potential technique to reduce the computational burden of constraint learning with ReLU surrogates is pruning.\\
Pruning is a machine learning technique that aims to reduce the size of a neural network by removing unnecessary connections (parameters) or entire structures (neurons, kernels). The origins of pruning date back to the late 80s, when the need to compensate for modest computational power and the goal of understanding NNs' learning behavior drove researchers to investigate the impact of individual parameters on the functional relationship, leading to its eventual simplification~\cite {Sietsma_1988, Mozer1988SkeletonizationAT, MOZER1989_UsingRelevanceReduce}.
After the "AI winter" period (1990s-2000s), pruning lived a renaissance, with dozens of studies proposing methods to reduce the computational and memory demands of modern deep NNs, especially for deployment on resource-constrained systems (e.g., smartphones and edge devices)~\cite{Han2015_LearningbothWeights, Blalock2020_WhatisState}.
Different pruning strategies have been proposed in the literature, which can be broadly classified into \textit{unstructured} and \textit{structured} methods. In unstructured pruning (also known as \textit{weight pruning}), individual weights are eliminated from the network to produce sparse counterparts. In structured pruning, entire groups of parameters are removed. For instance, in fully-connected NNs, structured pruning can be used to remove entire nodes by cutting both input and output weights of a neuron (\textit{node pruning}). In contrast, unstructured pruning introduces random sparsity by eliminating individual weights.\\
While pruning has been extensively studied for model compression and deployment, focusing on how reductions in network size affect prediction accuracy, its application to improving the computational efficiency of optimization problems with embedded NNs remains largely unexplored. 
Some recent contributions have demonstrated empirical benefits of pruning for optimization tasks. 
Cacciola et al.~\cite{Cacciola2023_StructuredPruningNeural} state that structured pruning is particularly effective for MIPs with embedded NNs as it removes entire variables and constraints from the formulation, enabling solvers to leverage automatic structure detection and leading to exponential speedup.
Pham et al.~\cite{Pham2025_OptimizationoverTrained} investigated unstructured pruning for constraint learning, finding that cost-effective sparsification without fine-tuning can yield strong results in network verification tasks, even with low-accuracy surrogates. 
Yet, a theoretical understanding of why pruning accelerates optimization, supported by a comprehensive analysis comparing different pruning strategies highlighting their respective effect on the formulation of ReLU NNs is still missing.\\
This paper addresses these gaps by providing both theoretical insights and empirical evidence on the effectiveness of pruning for constraint learning with ReLU networks. 
Our contributions are twofold: (1) we develop a theoretical framework explaining how pruning affects the big-M formulation of ReLU networks and the performance of MILP solvers, with particular focus on problem size reduction and bound tightening; 
(2) we conduct a comprehensive empirical study comparing unstructured (weight) and structured (node) pruning strategies across multiple case studies, analyzing their impact on problem formulation, optimization runtime, and solution quality.\\
The remainder of this paper is organized as follows. Section~\ref{sec:background} provides background on pruning techniques and the big-M formulation of ReLU networks. 
Section~\ref{sec:methods} describes the iterative magnitude-based pruning approach used throughout the study and analyzes how pruning affects the algebraic formulation and solver performance, proving that pruning leads to tighter bounds in the big-M formulation. 
These findings are then confirmed with a comprehensive empirical evaluation in Section~\ref{sec:experiments}, where the impact of pruning on optimization problems is assessed on a test function and an engineering case study.

\section{Background}
\label{sec:background}
This section provides the necessary background to understand how pruning techniques can enhance the computational efficiency of optimization problems with embedded ReLU networks. 
We first introduce the basics of neural network pruning, including the distinction between unstructured and structured methods. 
Then, we present the big-M formulation, the most widely adopted approach for embedding ReLU NNs into optimization problems.
Finally, we discuss bound tightening techniques, which play a crucial role in the performance of MILP solvers, and we introduce how pruning can affect their performance.
\subsection{Pruning of neural networks}
\label{sec:pruning_nn}
Pruning aims to reduce the size of a neural network by cutting unnecessary connections or entire structures.
Almost all strategies to prune NNs can be viewed as a subclass of a common algorithmic pattern. Therein, a (large) neural network is firstly trained to convergence. Afterward, the network’s weights and biases are evaluated based on a specified criterion and assigned a score accordingly. Based on these scores, parameters are selected and removed by masking them in the weight matrices or bias vectors. This process can be iteratively repeated~\cite{Han2015_LearningbothWeights}. Optionally, the network can be fine-tuned to partially recover accuracy potentially lost during pruning by refining the remaining parameters~\cite{Renda2020_ComparingRewindingFine}. Algorithm~\ref{alg:pruning_finetuning} summarizes this iterative pruning procedure. 
Let $\boldsymbol{\theta}$ denote the network parameters and 
$M \in \{0,1\}^{|\boldsymbol{\theta}|}$ a binary mask indicating which parameters are active. 
The network output is evaluated as $f(X; M \odot \boldsymbol{\theta})$, where 
$X$ denotes the training data and $\odot$ the element-wise product. 
At each iteration, a scoring function $S = \textit{score}(M \odot \boldsymbol{\theta})$ 
computes importance scores for the parameters or neural network structures (e.g., nodes), 
for instance based on their magnitude. 
The pruning function $\textit{prune}(M,S)$ then updates the mask $M$ by removing parameters 
with low scores according to the chosen pruning strategy.\\
\begin{algorithm}[H]
\caption{Iterative pruning and optional fine-tuning procedure.}
\label{alg:pruning_finetuning}
\begin{algorithmic}[1]
\REQUIRE $N$ (number of pruning iterations), $X$ (training dataset)
\STATE $\boldsymbol{\theta} \leftarrow \textit{initialize}()$
\STATE $\boldsymbol{M} \leftarrow \mathbf{1}^{|\boldsymbol{\theta}|}$
\FOR{$i \leftarrow 1$ \TO $N$}
    \STATE $\boldsymbol{\theta} \leftarrow \textit{trainToConvergence}(f(X; \boldsymbol{M} \odot \boldsymbol{\theta}))$
    \STATE $S \leftarrow \textit{score}(\boldsymbol{M} \odot \boldsymbol{\theta})$
    \STATE $\boldsymbol{M} \leftarrow \textit{prune}(\boldsymbol{M}, S)$
\ENDFOR
\STATE \COMMENT{Optional fine-tuning step}
\STATE $\boldsymbol{\theta} \leftarrow \textit{fineTune}(f(X; \boldsymbol{M} \odot \boldsymbol{\theta}))$
\RETURN $\boldsymbol{M}, \boldsymbol{\theta}$
\end{algorithmic}
\end{algorithm}
Following Algorithm~\ref{alg:pruning_finetuning}, many different pruning strategies can be defined. One way to characterize pruning methods is to classify them in \textit{unstructured} and \textit{structured} methods. In unstructured pruning (also known as \textit{weight pruning}), individual weights are eliminated from the network to produce sparse counterparts. In structured pruning, entire groups of parameters are removed. For instance, in fully-connected NNs, structured pruning can be used to remove entire nodes by cutting both input and output weights of a neuron (\textit{node pruning}). In contrast to the random sparsity introduced by unstructured pruning, structured pruning reduces network size by eliminating entire structures, such as rows within weight tensors (nodes) or even entire tensors (layers).\\
Pruning methods also differ in the criterion used to determine which parameters to prune. \textit{Magnitude-based} pruning assumes that small weights have a negligible impact on the prediction of a neural network~\cite{Han2015_LearningbothWeights}. Consequently, weights are pruned based on their absolute values, either by applying a threshold below which they are removed or by specifying a target percentage of parameters to remove (i.e., sparsity). In the latter, parameters are progressively pruned, starting from the smallest values until the desired sparsity is reached. A more complex alternative is given by \textit{sensitivity-based} pruning, which finds the contribution of each parameter to the overall loss function using sensitivity analysis~\cite{LeCun1989_OptimalBrainDamage}. Parameters with low influence on the loss function are then pruned from the network. Sensitivity-based pruning requires computing or approximating the first and second derivatives of the loss function with respect to each parameter, leading to a significantly higher computational cost compared to the simpler class of magnitude-based methods~\cite{Pham2025_OptimizationoverTrained}.\\
Beyond model compression, pruning provides a useful framework for improving the efficiency of optimization problems with trained NNs embedded. In fact, the elimination of parameters and structures from the neural network tensors affects the network's algebraic formulation.

\subsection{Big-M formulation of ReLU networks}
\label{sec:bigM_formulation}
NNs are often used as surrogate models to replace complex or unknown constraints in optimization problems.
This method is also known as constraint learning, since a learning algorithm approximates optimization constraints.
A general form of optimization problems with neural network surrogates can be formulated as follows:
\begin{equation}
    \label{eq:opt_problem_generalform}
    \begin{aligned}
    &\min_{\textbf{x}\in \mathcal{D}, \textbf{t} \in \mathcal{T}} f(\textbf{x}, \textbf{y}, \textbf{t})\\
    &\text{s.t.} \quad \textbf{y} = h_\mathbf{\theta}(\textbf{x})\\
    & \quad \quad \:\, \textbf{g}(\textbf{x},\textbf{y}, \textbf{t})\leq \textbf{0}
    \end{aligned}
\end{equation}
Here $\textbf{x}\in \mathbb{R}^{N_i}$, bounded in a domain $\mathcal{D}=[\textbf{x}^L,\textbf{x}^U]$, is the input to the neural network. Together with additional variables $t \in \mathcal{T}$, they define the degrees of freedom of the problem. $h_\mathbf{\theta}$ is a neural network with trained parameters $\mathbf{\theta}$ approximating a (set of) equality constraints $h_\theta: \mathcal{D} \mapsto \mathbb{R}^{N_O}$. Lastly, $\textbf{g} : \mathcal{D} \times \mathbb{R}^{N_O} \times \mathcal{T} \mapsto \mathbb{R}^{N_C}$ 
represents a set of inequality constraints and 
$f : \mathcal{D} \times \mathbb{R}^{N_O} \times \mathcal{T} \mapsto \mathbb{R}$ is the objective function 
of the optimization problem. Note that equality constraints can also be represented in this formulation 
by combining two inequalities of the form $g_i(\textbf{x},\textbf{y},\textbf{t}) \le 0$ and 
$-g_i(\textbf{x},\textbf{y},\textbf{t}) \le 0$.\\
To enable the use of deterministic global optimization methods, neural networks are often reformulated algebraically by representing their operations as a system of equations $\mathbf{h}(\mathbf{x},\mathbf{z})$. This approach introduces additional variables $\textbf{z}$ corresponding to the internal activations of the network and allows the resulting model to be handled in an equation-oriented environment. Alternative approaches exist that avoid an explicit system representation and instead operate directly on the neural network function (\textit{reduced-space})~\cite{Schweidtmann2018_DeterministicGlobalOptimization,Schweidtmann2022_OptimizationTrainedMachine} or treat the network as a black-box model~\cite{ElorzaCasas2025_comparisonstrategiesembed}. In this work, we adopt the commonly used \textit{full-space} (FS) representation, thereby introducing a new set of bounded variables $\textbf{z} \in \mathcal{Z}$ representing the internal neural network activation values.
\begin{equation}
    \label{eq:fs-formulation}
    \begin{aligned}
    &\min_{\textbf{x}\in \mathcal{D}, \textbf{t} \in \mathcal{T}, \textbf{z} \in \mathcal{Z}} f(\textbf{x}, \textbf{y}, \textbf{t})\\
    &\text{s.t.} \quad \textbf{y} - \textbf{h}(\textbf{x}, \textbf{z}) = 0\\
    & \quad \quad \:\, \textbf{g}(\textbf{x},\textbf{y},\textbf{t})\leq \textbf{0}
    \end{aligned}
\end{equation}
In this formulation (Eq.~\eqref{eq:fs-formulation}), the input $\textbf{x}$ and neural network variables $\textbf{z} \in \mathcal{Z}$ are optimization variables, and it is therefore referred to as a full-space (FS) formulation. FS formulations are commonly used in deterministic global solvers, and most B\&B algorithms require bounds on $\textbf{x}$ and $\textbf{z}$~\cite{Schweidtmann2018_DeterministicGlobalOptimization}. In the FS setting, the algebraic structure of $\mathbf{h}(\mathbf{x},\mathbf{z})$ depends on the activation functions used in the neural network. In the following, we focus on the rectified linear unit (ReLU), which remains one of the most widely used activation functions in practice due to its favorable computational properties and effective gradient propagation~\cite{Glorot2011_DeepSparseRectifier, Kunc2024_ThreeDecadesActivations}. ReLU is defined as
\begin{equation}
    \label{eq:relu}
    \text{ReLU}(x)=\max(0,x),
\end{equation}
Hence, it is not continuously differentiable, but piecewise linear and still continuous at $x=0$. Different methods have been proposed to formulate the activation-deactivation behavior algebraically~\cite{Huchette2023_WhenDeepLearning}. Nevertheless, the big-M formulation for ReLU networks remains the most widely used in literature~\cite{Grimstad2019_ReLUnetworksasa}. Big-M is a standard technique to model \textit{if-else} statements in optimization problems by introducing integer variables $\delta_i \in \{0,1\}$. Therein, a single neuron $i$ in layer $j$, $z_i^{(j)} = \text{ReLU} (W_i^{(j)} \textbf{z}^{(j-1)}+b_i^{(j)})$, can be formulated as a mixed-integer linear program (MILP), where we define $\textbf{x} = \textbf{z}^{(0)}$ and $\textbf{y} = \textbf{z}^{(N_L)}$ in the first and last layer of the NN, respectively:
\begin{equation}
    \label{eq:bigMformulation}
    \begin{aligned}
    & z_i^{(j)} \geq 0,\\
    & z_i^{(j)} \geq W_i^{(j)} \textbf{z}^{(j-1)} + b_i^{(j)},\\
    & z_i^{(j)} \leq W_i^{(j)} \textbf{z}^{(j-1)} + b_i^{(j)} - L_i^{(j)} (1-\delta_i^{(j)}),\\
    & z_i^{(j)} \leq U_i^{(j)} \delta_i^{(j)},\\
    & j=1,...,N_L, \quad i = 0, ..., N_N^{(j)} 
    \end{aligned}
\end{equation}
where $W_i^{(j)}$ is the $i$-th row of the weight matrix in the layer $j$, $b_i^{(j)}$ is the bias term of neuron $i$ in layer $j$, $\textbf{z}^{(j-1)}$ is the vector of incoming activation value, $N_L$ is the number of neural network layers and $N_N^{(j)}$ is the number of neurons in layer $j$.
A major role for the convergence performance of deterministic solvers is played by the integer variables $\delta_i^{(j)}$ (one for each neuron) and big-M coefficients $L_i^{(j)}$ and $U_i^{(j)}$.
In fact, the formulation assumes a bounded preactivation $L_i^{(j)} \leq W_i^{(j)} \textbf{z}^{(j-1)} + b_i^{(j)} \leq U_i^{(j)}$ that deterministic solvers use to build continuous relaxations.
The choice of broad bounds leads to weaker relaxations which make the feasible region unnecessarily large and the convergence of B\&B methods slow. 
Various stronger MILP formulations for ReLU networks based on big-M have been proposed in the literature~\cite{Anderson2020_Strongmixedinteger, Tsay2021_PartitionBasedFormulations, Huchette2023_NonconvexPiecewiseLinear}. 
Nevertheless, the choice of big-M values remains crucial and has motivated the development of techniques to tighten the corresponding interval bounds.

\subsection{Bound tightening}
\label{sec:bound_tightening}
Techniques aiming to refine the choice of variable bounds, such as big-M coefficients, are known as \textit{bound tightening} methods.
The main challenge in ReLU networks lies in the need to propagate known bounds on the input variables ($L^{(0)}=\textbf{x}^L$ and $U^{(0)}=\textbf{x}^U$) through successive layers to determine bounds $L_i^{(j)}$ and $U_i^{(j)}$ for each preactivation value such that $L_i^{(j)} \leq W_i^{(j)} \textbf{z}^{(j-1)} + b_i^{(j)} \leq U_i^{(j)}$.\\
To compute these bounds, different bound tightening strategies can be used, ranging from inexpensive propagation methods to more expensive optimization-based procedures. A basic approach is \textit{interval arithmetic}~(IA), which propagates input bounds through the network layer by layer. This yields valid but often weak estimates, since it assumes worst-case combinations and ignores dependencies between variables~\cite{Plate2025_analysisoptimizationproblems}. Stronger bounds can be obtained by propagating more informative relaxations than simple intervals, for example based on convex under- and overestimators such as McCormick or $\alpha\text{BB}$ relaxations~\cite{Schweidtmann2018_DeterministicGlobalOptimization, Androulakis1995_BBglobal}. Such methods remain computationally tractable, but typically capture fewer interactions than full optimization-based approaches. 
Optimization-based bound tightening~(OBBT) can further tighten the bounds by solving additional sub-optimization problems for each neuron. Specifically, the preactivation values $W_i^{(j)} \textbf{z}^{(j-1)} + b_i^{(j)}$ are minimized and maximized, respectively, subject to the neural network formulation, while treating the internal neural network variables $(\textbf{z}, \boldsymbol{\delta})$ as degrees of freedom. Typically, the integer variables are relaxed, yielding a set of more tractable LPs (LP-based bound tightening).\\
Although LP-based bound tightening estimates stronger bounds, reflecting in more efficient subsequent optimization, the cost of solving the LP sub-problems is not negligible, and scales badly with deep and large NNs (i.e., number of LP sub-problems = $2 \sum_j N_N^{(j)}$).
Moreover, the efficiency of OBBT is affected by the quality of the initial bounds provided by IA.
In practice, deterministic global solvers employ a combination of bound tightening strategies, often supported by additional heuristics.
The reader is also referred to the survey from Puranik and Sahinidis~\cite{Puranik2017_Domainreductiontechniques} for a complete overview of bound tightening techniques for global optimization.

\section{Methods}
\label{sec:methods}
This section presents the methodology used to investigate how pruning affects the computational efficiency of optimization problems with embedded ReLU networks. 
First, we introduce the intuition behind how pruning affects MILP solvers.
Then, we describe the iterative magnitude-based pruning strategy employed in this work.
Last, we provide a theoretical analysis of how pruning influences the big-M formulation of ReLU networks, examining its impact on problem size, solver performance, and bound tightening. 
This analysis establishes the theoretical foundation for understanding why pruned networks lead to faster optimization, which is then validated through empirical experiments presented in Section~\ref{sec:experiments}.\\

\subsection{How pruning can impact mixed-integer linear programming solvers}
\label{sec:milp_solvers}
Most commercial solvers, such as CPLEX, GUROBI~\cite{GurobiOptimization2024_GurobiOptimizerReference}, and XPRESS, and some of the noncommercial ones (e.g., SCIP~\cite{Bolusani2024_SCIPOptimizationSuite}), implement sophisticated \textit{branch and cut} methods yielding global optima for MILPs. 
In the branch and cut method, the B\&B tree search is supported by the introduction of additional cuts to strengthen the lower bound, thereby avoiding the exhaustive enumeration of all integer combinations~\cite{Grossmann2021_AdvancedOptimizationProcess}.
We identify three main stages in MILP solvers: (1) preprocessing, (2) lower bounding, and (3) upper bounding. In this work, we analyze how pruning-induced sparsity affects each stage.\\
During preprocessing, MILP codes employ techniques to reduce the problem size and provide bounds on the internal variables. We will discuss how pruning affects bound tightening techniques in Section~\ref{sec:bound_tightening}.
Structured sparsity, such as node pruning, affects the problem size reduction in pre-processing, that is the elimination of empty rows and columns with associated variables.
We analyze the problem size reduction empirically in our experiments (see Section~\ref{sec:analysis_experiments_formulation}).
The lower bound of the problem is found by solving the continuous LP relaxation at every binary node of the search tree.
In branch and cut methods, the lower bound is strengthened by adding cutting planes (e.g., Gomory's method~\cite{Gomory1960_algorithmmixedinteger, Gomory2009_OutlineAlgorithmInteger}), which introduce additional linear inequalities to obtain a tighter relaxation.
Hence, the lower bound is found by iteratively solving a sequence of LPs while adding cutting planes.
Sparsity positively impacts the solution of the relaxed LP problems commonly tackled through the simplex method.
In particular, masking entire rows and columns in the weight tensors results in smaller basis matrices that are less expensive to factorize and invert, whereas the random sparsity introduced by weight pruning can be leveraged in linear algebra routines to reduce factorization cost.
The generation of cutting planes exploits information from the optimal simplex tableau, hence we do not expect an impact of sparsity in this phase~\cite{Bertsimas1997_IntroductionLinearOptimization}.
The upper-bounding step is usually based on heuristics such as rounding of fractional variables, feasibility pump, or local searches to find integer-feasible solutions~\cite{Bertsimas1997_IntroductionLinearOptimization}. Pruning may improve the efficiency of this stage when such heuristics rely on solving auxiliary LPs or sub-MILPs, since smaller and sparser formulations can again reduce computational cost. Nevertheless, because upper-bounding performance depends strongly on the specific heuristic and solver implementation, we expect the effect of pruning in this stage to be less systematic than in preprocessing or lower bounding.

\subsection{Iterative magnitude-based pruning}
\label{sec:iterative_pruning}
We perform magnitude-based weight and node pruning based on the approach proposed by Han et al.~\cite{Han2015_LearningbothWeights}.
Since pruning is employed here to enhance computational performance in downstream optimization tasks, we argue that the pruning procedure should be simple and lightweight; hence, more complex and costly sensitivity-based methods are avoided.
Moreover, simple pruning criteria are often reported to match the effectiveness of more sophisticated methods~\cite{Yu2022_CombinatorialBrainSurgeon, Pham2025_OptimizationoverTrained}.

\subsubsection{Weight pruning}
In magnitude-based weight pruning, a target sparsity level $s_f$ is specified. The weights are then ranked according to their absolute values, and those with the smallest magnitude are pruned until the desired sparsity is reached. The target sparsity is defined as
\begin{equation}
    \label{eq:sparsity_general}
    s_f = \frac{n_p}{n_t},
\end{equation}
where $n_p$ is the number of weights pruned from the neural network and $n_t$ is the total number of weights of the unpruned network.\\
Similar to Zhu et al.~\cite{Zhu2017_prunenotprune}, we extend the original pruning method of Han et al.~\cite{Han2015_LearningbothWeights} by adopting an iterative pruning strategy with a constant relative pruning rate across iterations. Specifically, at each pruning iteration, the same fraction $s_r$ of the weights that remain unpruned is removed. This iterative procedure helps mitigate the loss in predictive accuracy that may occur when pruning is performed in a single step. The relative pruning rate $s_r$ at a given iteration is defined as
\begin{equation}
    \label{eq:sparsity_relative}
    s_r = \frac{n_p}{n_u},
\end{equation}
where $n_p$ is the number of weights pruned at that iteration and $n_u$ is the number of weights that remain unpruned before pruning. As the number of unpruned weights decreases over the iterations, the absolute number of weights removed at each step decreases accordingly. The relative pruning rate is treated as a training hyperparameter and can be tuned alongside the other hyperparameters.\\
The number of pruning iterations, $N$, is chosen such that the desired final sparsity is reached while keeping the relative pruning rate per iteration as close as possible to the specified value. After each pruning step, the remaining weights are retained and used to initialize the next training iteration, rather than reinitializing the network from scratch. The iterative weight pruning routine used in this work is described in Algorithm~\ref{alg:pruning_weights}. There, the \textit{score} function evaluates the absolute values of the currently active weights and selects the smallest ones until the target relative pruning rate $s_r$ is reached. Once the desired final sparsity has been attained, an optional fine-tuning step can be performed to recover predictive accuracy. Depending on the application, however, this step is not always necessary and may even degrade performance~\cite{Liu2018_RethinkingValueNetwork, Plate2025_analysisoptimizationproblems}.

\begin{algorithm}[H]
\caption{Iterative magnitude-based weight pruning}
\label{alg:pruning_weights}
\begin{algorithmic}[1]
\REQUIRE $s_f$ (target sparsity), $s_r$ (relative pruning rate), $X$ (training dataset)
\STATE $N \leftarrow \textit{calculateNumIterations}(s_f, s_r)$
\STATE $\boldsymbol{\theta} \leftarrow \textit{xavierInitialization}()$
\STATE $\boldsymbol{M} \leftarrow \mathbf{1}^{|\boldsymbol{\theta}|}$
\FOR{$i \leftarrow 1$ \TO $N$}
    \STATE $\boldsymbol{\theta} \leftarrow \textit{trainToConvergence}(f(X; \boldsymbol{M} \odot \boldsymbol{\theta}))$
    \STATE $S \leftarrow \textit{score}(\boldsymbol{M} \odot \boldsymbol{\theta}, s_r)$
    \STATE $\boldsymbol{M} \leftarrow \textit{prune}(\boldsymbol{M}, S)$
\ENDFOR
    \IF{fine-tuning is desired}
        \STATE $\boldsymbol{\theta} \leftarrow \textit{fineTune}(f(X; \boldsymbol{M} \odot \boldsymbol{\theta}))$
    \ENDIF
\RETURN $\boldsymbol{M}, \boldsymbol{\theta}$
\end{algorithmic}
\end{algorithm}

\subsubsection{Cleaning dead neurons}
When pruning weights from a network, it may happen that certain neurons no longer influence the network output and can therefore be removed entirely. This can occur, for example, when all incoming weights to a neuron have been pruned (Figure~\ref{fig:dead_neuron_single}), so that its activation becomes constant, or when all outgoing weights have been removed, so that the neuron no longer affects subsequent layers. We refer to such neurons as \textit{dead neurons}.\\
If all outgoing weights of a neuron have been removed, the neuron can be deleted directly, as it is disconnected from the remainder of the network. The more relevant case considered here is when all incoming weights to a neuron have been pruned. In that situation, the neuron no longer depends on the network input, yet it still produces a constant output given by its bias term. To remove the neuron without altering the network function, this constant contribution must be transferred to the biases of the neurons in the subsequent layer. This principle has been discussed in the broader literature on neural network simplification and compression~\cite{Vecoven2020_Introducingneuromodulationdeep}. The resulting bias update for each connected neuron in layer $j+1$ is
\begin{equation}
    \label{eq:dead_neuron_bias}
    b_i^{\prime(j+1)} = b_i^{(j+1)} + w^{(j+1)}_{d,i} \cdot \sigma(b_d^{(j)}),
\end{equation}
Equation~\eqref{eq:dead_neuron_bias} defines the adjusted bias term $b_i^{\prime(j+1)}$ for a neuron $i$ in the subsequent layer $j+1$ of a dead neuron in layer $j$ having index $d$. There, $w^{(j+1)}_{d,i}$ is the output weight connecting the dead neuron to a neuron $i$ in the subsequent layer, $b_i^{(j+1)}$ represents its original bias term and $\sigma(b_d^{(j)})$ is the constant output of the dead neuron, where $\sigma$ is the ReLU activation function in this case. Figure~\ref{fig:illustrative_dead_neuron_cleaning} illustrates the dead-neuron cleaning process, in which the constant activation of a dead neuron (index $0$) is transferred to the biases of three subsequent neurons through its output weights. The corresponding output weights ($w_{0,1}$, $w_{0,2}$, $w_{0,3}$) can then be safely pruned from the network.
\begin{figure}[t]
    \centering
    \begin{subfigure}[b]{0.3\linewidth}
        \centering
        \includegraphics[width=\linewidth]{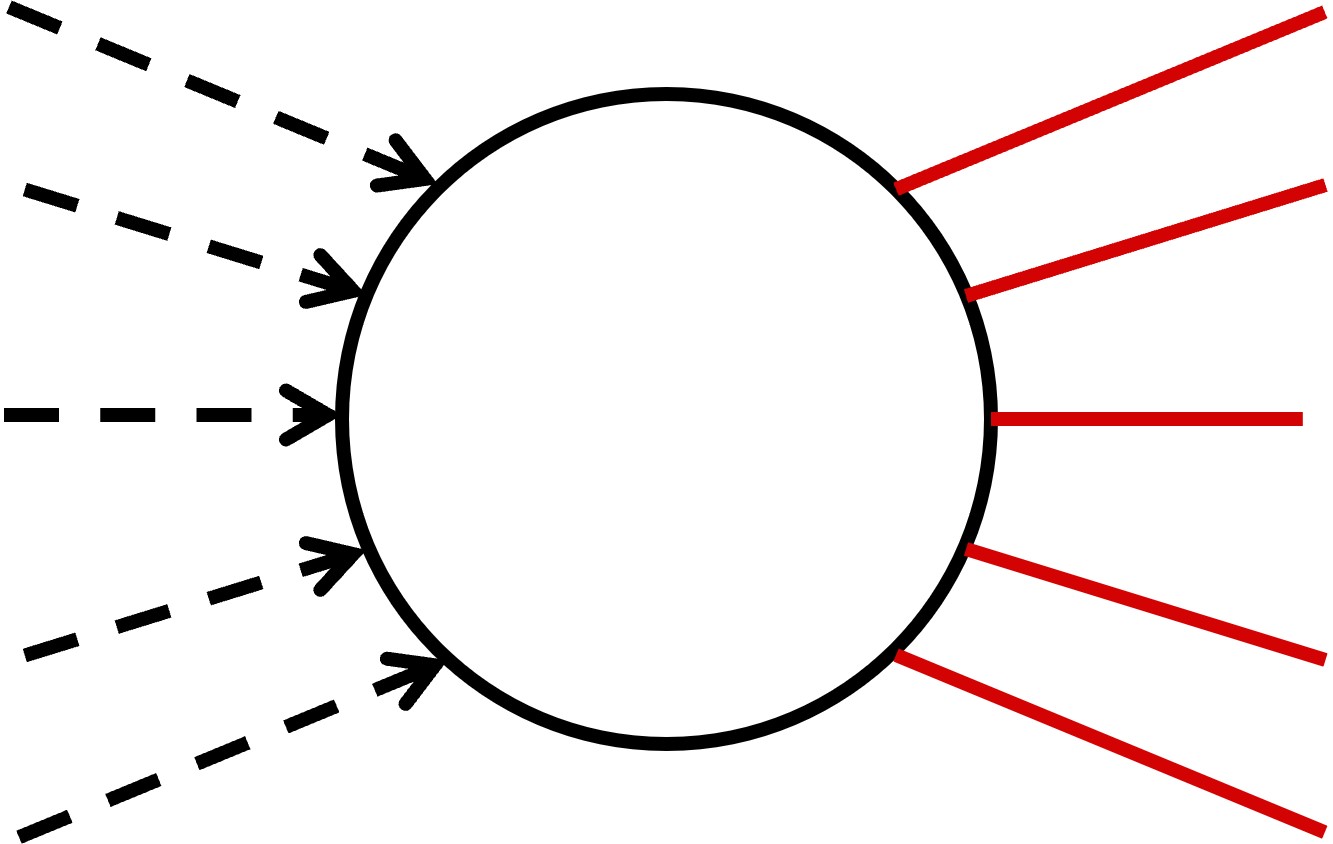}
        \caption{}
        \label{fig:dead_neuron_single}
    \end{subfigure}
    \hspace{0.1\linewidth}
    \begin{subfigure}[b]{0.3\linewidth}
        \centering
        \includegraphics[width=\linewidth]{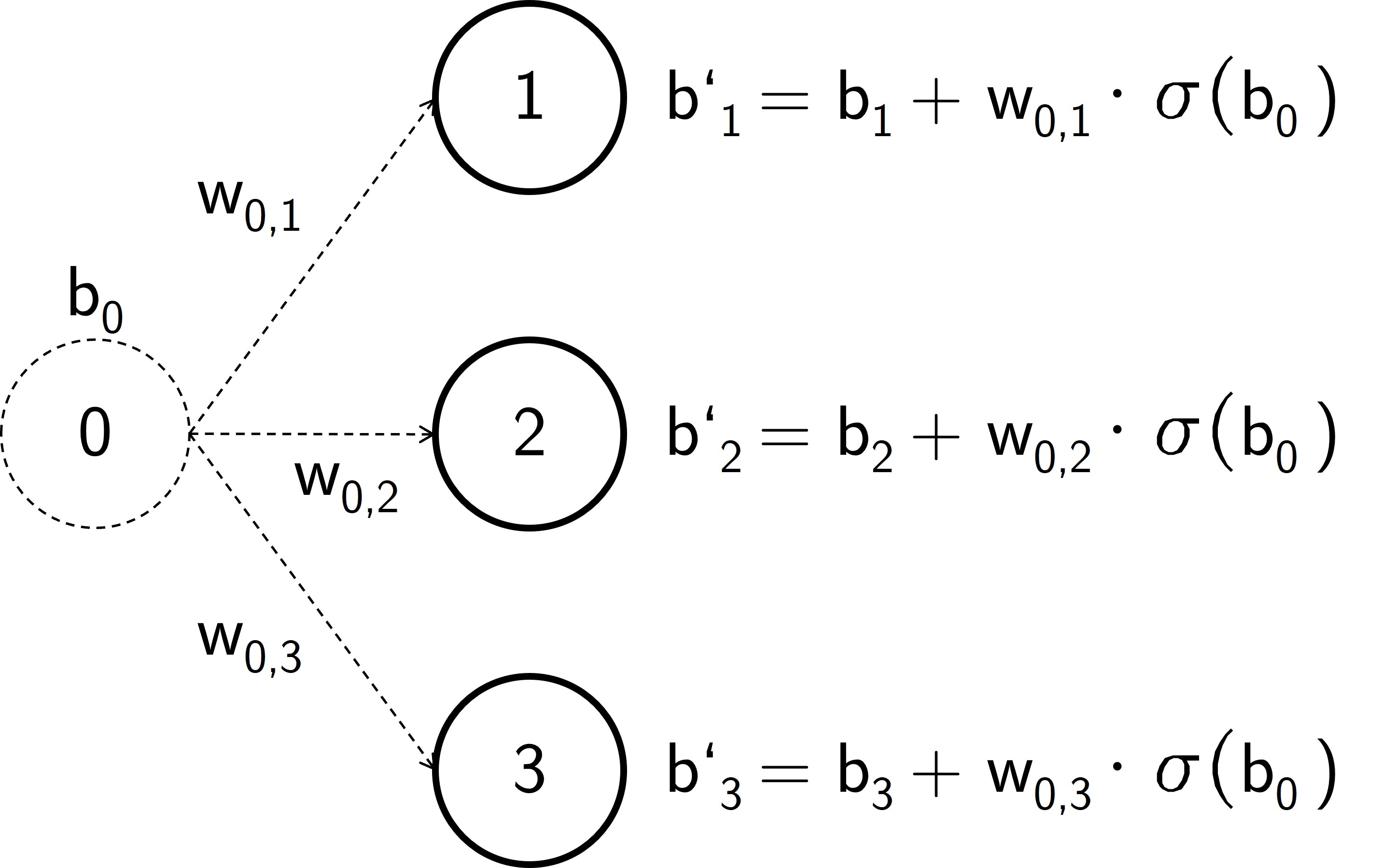}
        \caption{}
        \label{fig:illustrative_dead_neuron_cleaning}
    \end{subfigure}
    \caption{(a) Illustration of a dead neuron with pruned incoming weights and consequent flat activation and (b) an illustrative example for the dead neuron cleaning, where the biases of the neurons in the subsequent layer are adjusted to account for the constant activation of the dead neuron, with consequent safe removal of the output weights.}
    \label{fig:dead_neuron_cleaning}
\end{figure}

\subsubsection{Node pruning}
\textit{Node pruning} is a structured pruning method for fully-connected NNs, where entire nodes are removed from the network.
The effectiveness of tailored structured pruning for subsequent network embedding in optimization problems has been demonstrated by Cacciola et al.~\cite{Cacciola2023_StructuredPruningNeural}.
However, its effectiveness lacks a clear theoretical justification beyond empirical arguments, and a direct performance comparison with weight pruning has not yet been conducted.
In node pruning, entire nodes are removed from the network.
This is done by cutting all incoming weights and bias of a neuron, as well as all weights connected to the output of the neuron.
Similarly to the weight pruning case, we employ a magnitude-based technique that removes nodes with the smallest sum of absolute values of their input weights.
Given the nature of the ReLU activation function, pruning nodes with small input weights eliminates neurons producing near-zero activations, which should minimally affect the network predictions.
To preserve the relative widths of the hidden layers, node pruning is performed independently within each layer using the same sparsity level. In other words, the same fraction of nodes is removed from every hidden layer.\\
The algorithm to perform node pruning is equivalent to the one for weight pruning (Algorithm~\ref{alg:pruning_weights}). However, the definition of sparsity differs in this context, as it is considered on a node-wise basis. For instance, $s_f$ should be interpreted as the ratio of pruned nodes to the total number of nodes. Consequently, a moderate level of node sparsity may correspond to a high level of weight sparsity. Moreover, the \textit{prune} function accounts for layer-wise constant sparsity. Nodes are then pruned by masking out all input weights, output weights, and the bias term.\\
\textbf{Illustrative example.}
Consider a fully connected network with architecture $2\!-\!5\!-\!5\!-\!1$.  
The total number of weights is $40$, without accounting for bias terms.
With $20\%$ \emph{weight pruning}, we remove $8$ weights, leaving $32$ weights (a $20\%$ reduction).
With $20\%$ \emph{node pruning} applied evenly to each hidden layer, one neuron is removed from each $5$-neuron hidden layer, giving a new architecture $2\!-\!4\!-\!4\!-\!1$.  
The remaining number of weights becomes $28$.
Thus $12$ weights are removed, corresponding to a $30\%$ reduction.  

\subsubsection{Regularization}
Regularization is used to facilitate pruning.
Regularization factors in the loss function, such as L1 or L2, are commonly introduced to avoid overfitting and increase the generalization performance of a neural network~\cite{Kukacka2017_RegularizationDeepLearning}.
Hence, the idea of using regularization in combination with pruning is to promote sparsity in the weight matrices~\cite{Cacciola2023_StructuredPruningNeural}.
For instance, $L_1$ regularization (i.e., $R(w) = \|W\|_1$) can drive entries of the weight matrices $W$ toward zero.
Sparse weight matrices allow for removing parameters without affecting the model output.
The literature shows no clear consensus between L1 and L2 regularization in pruning, as some studies report better results with L1, while others favor L2~\cite{Han2015_LearningbothWeights, Xiao2018_TrainingFasterAdversarial, Zhang2018_SystematicDNNWeight}.
Besides, a variety of different regularization techniques have been used in the past to support pruning~\cite{LambertLacroix2016_adaptiveBerHupenalty, Louizos2017_LearningSparseNeural, Chen2021_OnlyTrainOnce}.
Furthermore, researchers developed tailored regularization techniques to induce structured sparsity, enabling more efficient structured pruning.~\cite{Wen2016_LearningStructuredSparsity, Cacciola2022_DeepNeuralNetworks}.
In our preliminary experiments, L2 regularization yielded the best accuracy after pruning, under a common sparsity target; therefore, ReLU networks trained with L2 regularization are considered in this work.

\subsection{Big-M formulation of pruned networks}
\label{sec:formulation_pruned}
The solution to an optimization problem with embedded ReLU networks is complex, as it belongs to the class of mixed-integer problems ($\mathcal{NP}$-complete).
In general, the complexity of a mixed-integer program depends on (1) the problem size (i.e., the number of variables and constraints), (2) the number of integer variables, i.e., the combinatorial complexity, and (3) the bounds tightness, i.e., the strength of the relaxation of the integer variables.
In the following, we discuss how pruning affects the big-M formulation of ReLU networks, thereby impacting each of the aforementioned factors and explaining the resulting increase in computational efficiency.

\subsubsection{Pruning leads to smaller and simpler problems}
\label{sec:problem_formulation_sparsity}
The formulation of pruned NNs is affected by the sparsity induced by pruning itself.
To assess its impact on the computational runtime, it is necessary to understand how different \textit{types} of sparsity induced by pruning influence each stage of the solver.
In weight pruning, some of the coefficients of the weight matrix row $W_i^{(j)}$ (cf. Eq.~\eqref{eq:bigMformulation}) may be masked.
Hence, the second and third inequality constraints in Eq.~\eqref{eq:bigMformulation} will ultimately depend on a subset of the incoming activation values $\mathbf{z}^{(j-1)}$.
The optimization solver will thereby receive a randomly sparse constraint matrix.
More interestingly, in node pruning, entire rows of the matrix $W^{(j)}$ and corresponding columns of the subsequent matrix $W^{(j+1)}$ are set to zero (cf. Figure 3 in~\cite{Cacciola2023_StructuredPruningNeural} for an explicative illustration).
This structured sparsity leads to the elimination of entire constraints from the neural network's algebraic formulation.
The solver can then delete the related activation $z_i^{(j)}$ and, more importantly, the integer variable associated with the node that has been pruned.\\
Random sparsity induced by weight pruning can be exploited in linear algebra routines to reduce the cost of basis-matrix factorization and the associated linear solves arising in the solution of the relaxed LP problems.
Conversely, structured sparsity from node pruning (1) reduces the number of variables and constraints, thereby affecting LP solution time, and (2) eliminates integer variables, which directly decreases the combinatorial complexity of the problem and the number of branching steps required for the B\&B algorithm to converge.
This also corresponds to a reduction in the number of linear regions of the ReLU network, a factor identified by Plate et al.~\cite{Plate2025_analysisoptimizationproblems} as influencing the complexity of optimization problems with embedded ReLU networks.
As a final, relevant note, we observe that entire nodes may be pruned as a side effect of weight pruning, especially at high sparsity rates.
Therefore, the benefits observed for node pruning may likewise extend to weight pruning.

\subsubsection{Bounds tightening in pruned networks}
\label{sec:bound_tightening_pruning}
We show that pruned ReLU networks yield big-M formulations with systematically tighter bounds on neuron preactivations. 
This matters because the big-M constants $L_i$ and $U_i$ (bounding each preactivation $p_i = W_i\mathbf{z}+b_i$) directly shape the continuous relaxation solved inside B\&B: smaller intervals $[L_i,U_i]$ typically produce a stronger relaxation, hence better lower bounds and fewer nodes to explore~\cite{Puranik2017_Domainreductiontechniques}.
Intuitively, tighter bounds reduce the “vertical slack” available to the ReLU constraints, which tightens the convex relaxation of the piecewise-linear graph (Fig.~\ref{fig:relaxation}) and potentially accelerates convergence.
\begin{figure}
    \centering
    \includegraphics[width=0.5\linewidth]{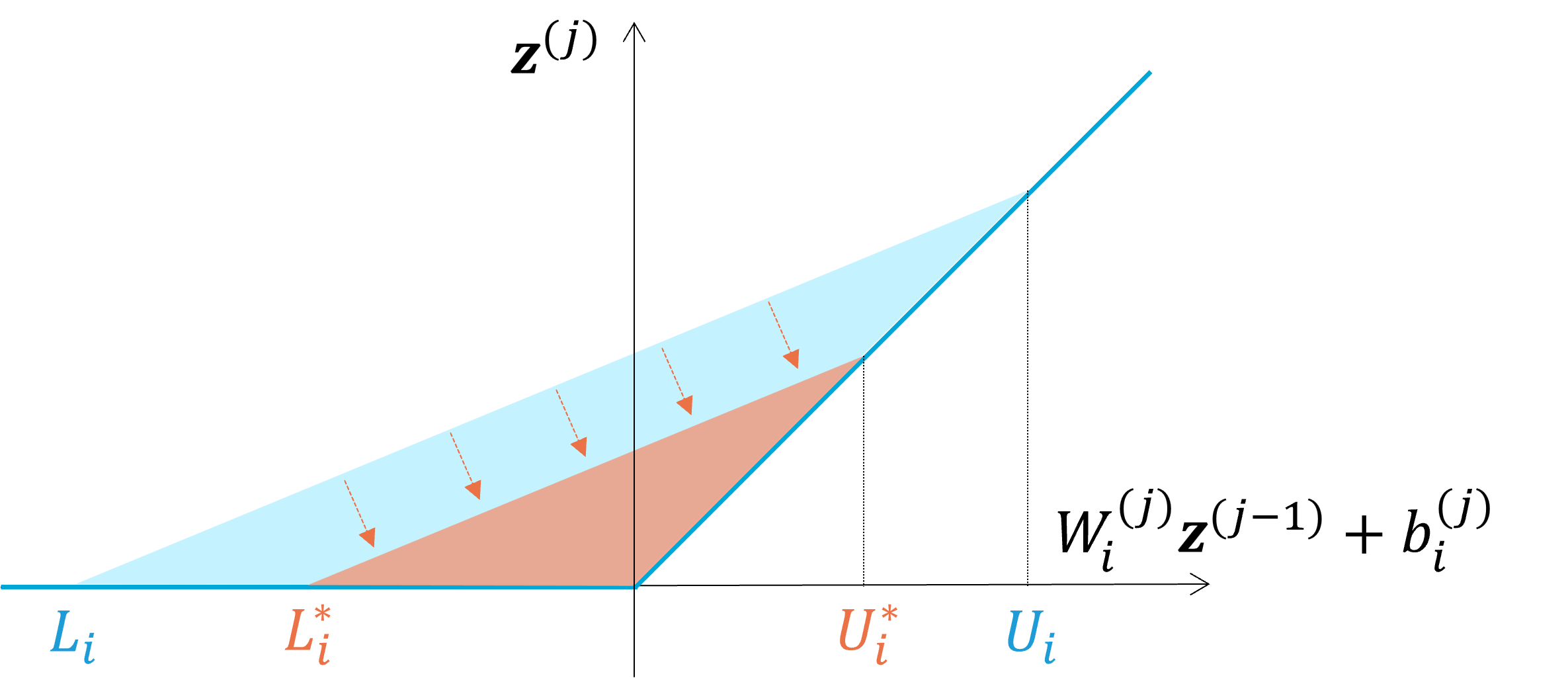}
    \caption{Bound tightening and resulting relaxation of ReLU.}
    \label{fig:relaxation}
\end{figure}
MILP solvers typically compute such bounds during variables definition and preprocessing.
In ReLU networks, the input bounds $\mathbf{z}^{(0)}$ are often given, e.g., as a box or as a convex-hull-based enclosure of observed data~\cite{Courrieu1994_Threealgorithmsestimating, Schweidtmann2021_Obeyvaliditylimits}.
These input bounds are then propagated forward to infer bounds for each neuron.
A common inexpensive method is interval arithmetic (cf. Sect.~\ref{sec:bound_tightening}), which propagates lower/upper bounds layer by layer:
\begin{align}
L_i^{(j)} &=
\sum_{k=1}^{N_N^{(j-1)}} \min\left\{W_{i,k}^{(j)} L_k^{(j-1)},\; W_{i,k}^{(j)} U_k^{(j-1)}\right\}
+ b_i^{(j)},
\quad j=1,\dots,N_L, \quad i=1,\dots,N_N^{(j)} \label{eq:lower_bound-IA} \\
U_i^{(j)} &=
\sum_{k=1}^{N_N^{(j-1)}} \max\left\{W_{i,k}^{(j)} L_k^{(j-1)}, W_{i,k}^{(j)} U_k^{(j-1)}\right\}
+ b_i^{(j)},
\quad j=1,\dots,N_L,\quad i=1,\dots,N_N^{(j)} \label{eq:upper_bound-IA}
\end{align}
IA always yields valid bounds but can be conservative because it ignores the interdependency between upstream active neurons~\cite{Pham2025_OptimizationoverTrained}.\\
We observe that pruning can partially alleviate the overestimation of the bounds width computed through IA.
We formalize this in the following propositions. First, we show a monotonic effect of the pruning step itself (Algorithm~\ref{alg:pruning_finetuning},~Line~6): pruning an arbitrary subset of weights while keeping the remaining weights fixed never worsens the IA widths at any layer. 
Then, under mild conditions, we show that pruning results in a strict decrease that propagates throughout the network.

\begin{proposition}[Monotonic tightening of big-M bounds under pruning]
\label{prop:monotonic-tightening}
Consider a feedforward ReLU network
$$
    \mathbf{p}^{(j)} = W^{(j)}\mathbf{z}^{(j-1)} + \mathbf{b}^{(j)}, \qquad
    \mathbf{z}^{(j)} = \mathrm{ReLU}\!\left(\mathbf{p}^{(j)}\right), \qquad
    j = 1,\dots,N_L
$$
with input box $\mathbf{x} \equiv \mathbf{z}^{(0)}\in[\mathbf{L}^{(0)},\mathbf{U}^{(0)}]$ and preactivation $\mathbf{p}$.  
Let $(\mathbf{L}^{(j)},\mathbf{U}^{(j)})$ denote the standard interval-arithmetic (IA) preactivation bounds defined as in Eqs.~\eqref{eq:lower_bound-IA}--\eqref{eq:upper_bound-IA}
and let the activation bounds be $\mathbf{z}^{(j)}_L=\max\{\mathbf{0},\mathbf{L}^{(j)}\}$, $\mathbf{z}^{(j)}_U=\max\{\mathbf{0},\mathbf{U}^{(j)}\}$ (componentwise).
Let
$$
    \Delta \mathbf{z}^{(j)} := \mathbf{z}_U^{(j)} - \mathbf{z}_L^{(j)} \in \mathbb{R}_+^{N_N^{(j)}}, \qquad
    \Delta \mathbf{p}^{(j)} := \mathbf{U}^{(j)} - \mathbf{L}^{(j)} \in \mathbb{R}_+^{N_N^{(j)}}
$$
be the corresponding interval widths.

Let $\mathbf{W}'^{(j)}$ be a pruned version of $\mathbf{W}^{(j)}$ obtained by pruning an arbitrary subset of entries, and compute the corresponding IA bounds $(L'^{(j)},U'^{(j)})$ and widths 
$\Delta \mathbf{p}'^{(j)},\Delta \mathbf{z}'^{(j)}$.

Then, for every layer $j$,
$$
\boxed{\;\Delta \mathbf{p}'^{(j)} \le \Delta \mathbf{p}^{(j)}\;}\quad\text{and}\quad
\boxed{\;\Delta \mathbf{z}'^{(j)} \le \Delta \mathbf{z}^{(j)}\;}
\quad\text{(componentwise)}.
$$
Consequently, the big-$M$ gaps $U_i^{(j)}-L_i^{(j)}(=\Delta p^{(j)}_i)$ used to encode
$\mathrm{ReLU}(p^{(j)}_i)$ are never larger after pruning.
\end{proposition}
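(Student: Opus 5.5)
The plan is an induction over the layers $j=1,\dots,N_L$, built on one closed-form identity for interval-arithmetic widths. Since $\max\{aL,aU\}-\min\{aL,aU\}=|a|(U-L)$ for any scalar $a$ and any $L\le U$, subtracting Eq.~\eqref{eq:lower_bound-IA} from Eq.~\eqref{eq:upper_bound-IA} cancels the bias and gives
\begin{equation*}
\Delta p_i^{(j)}=\sum_{k=1}^{N_N^{(j-1)}} \bigl|W_{i,k}^{(j)}\bigr|\,\Delta z_k^{(j-1)},
\end{equation*}
where I set $\Delta \mathbf{z}^{(0)}:=\mathbf{U}^{(0)}-\mathbf{L}^{(0)}$ for the input layer. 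Pruning sets some entries to zero and leaves the others unchanged, so $|W'^{(j)}_{i,k}|\le|W^{(j)}_{i,k}|$ entrywise. If $\Delta\mathbf{z}'^{(j-1)}\le\Delta\mathbf{z}^{(j-1)}$ is known, every term is a product of nonnegative factors that can only shrink, so $\Delta\mathbf{p}'^{(j)}\le\Delta\mathbf{p}^{(j)}$. This settles the base case $j=1$ unconditionally, because the input box is untouched by pruning.

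The second step, and the one I expect to be the real obstacle, is passing from $\Delta\mathbf{p}'^{(j)}\le\Delta\mathbf{p}^{(j)}$ to $\Delta\mathbf{z}'^{(j)}\le\Delta\mathbf{z}^{(j)}$. The activation width $\max\{0,U\}-\max\{0,L\}$ depends on where the interval sits relative to $0$, not only on its length. Pruning does not just shrink the preactivation interval; it also shifts it, since a removed term contributed $\min\{W L,W U\}$, which need not be $\le 0$.

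A concrete check shows the statement can fail as written. Take $z_1\in[5,6]$, $z_2\in[0,1]$ and $p=-z_1+z_2$. Then $p\in[-6,-4]$, so $\Delta z=0$. Pruning the first weight gives $p=z_2\in[0,1]$, so $\Delta z'=1$. Feeding this neuron into a further layer also breaks the $\Delta\mathbf{p}$ claim at layer $j+1$.

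So, rather than trying to force the general statement, I would prove it under a containment hypothesis: every upstream interval $[L_k^{(j-1)},U_k^{(j-1)}]$ (input box, or activation interval $[z_{L,k},z_{U,k}]$) contains $0$. For activations this means $L_k^{(j-1)}\le 0$, i.e.\ the neuron is not stably active. Under this hypothesis each removed term satisfies $\min\{\cdot\}\le 0\le\max\{\cdot\}$, which gives the stronger conclusion $[L'^{(j)}_i,U'^{(j)}_i]\subseteq[L^{(j)}_i,U^{(j)}_i]$. Because $\max\{0,\cdot\}$ is monotone, the activation intervals are nested as well, which yields $\Delta\mathbf{z}'^{(j)}\le\Delta\mathbf{z}^{(j)}$.

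To run the induction I would carry interval containment as the invariant instead of width comparison. The inductive step then follows directly from monotonicity of IA in the interval arguments and in $|W|$. As a fallback for stably active upstream neurons, I would fold the constant part $W_{i,k}z_{L,k}$ of each pruned term into the bias, in the spirit of Eq.~\eqref{eq:dead_neuron_bias}. This recentres the removed contribution at $0$ and restores containment.
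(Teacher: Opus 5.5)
Your proposal is correct. The example you give is a genuine counterexample to the proposition as stated, so the paper's proof must break somewhere, and it breaks at exactly the step you flagged. The paper's Steps 1--3 coincide with your first paragraph: the identity $\Delta\mathbf{p}^{(j)}=|W^{(j)}|\,\Delta\mathbf{z}^{(j-1)}$, then $|W'^{(j)}|\le|W^{(j)}|$ and induction. Its Step 4 then asserts that Step 3 yields $L'^{(j)}\ge L^{(j)}$ and $U'^{(j)}\le U^{(j)}$. It pushes this nesting through the monotone ReLU to obtain $\Delta\mathbf{z}'^{(j)}\le\Delta\mathbf{z}^{(j)}$, which the induction needs at the next layer. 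A smaller width does not give nesting, because deleting $W_{i,k}^{(j)}z_k^{(j-1)}$ also moves the interval by the midpoint of the removed term. The nonexpansiveness $\Delta\mathbf{z}\le\Delta\mathbf{p}$ of Step 2 does not help, since it only compares each network with itself.

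Your numbers check out. Interval arithmetic gives $[-6,-4]$ with $\Delta z=0$ before pruning and $[0,1]$ with $\Delta z'=1$ after. A downstream neuron with weight $1$ then has $\Delta p^{(2)}=0<1=\Delta p'^{(2)}$. This is not an artifact of an input box away from the origin. Take $x\in[0,1]$, $z_1=\mathrm{ReLU}(x+5)$, $z_2=\mathrm{ReLU}(x)$ and $p=-z_1+z_2$ in the next layer: the same failure occurs one layer deeper, and node pruning of $z_1$ produces it too. The strict-tightening result (Proposition~\ref{prop:strict-tight}) invokes this proposition for all layers and inherits the problem.

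Your repair is sound: carrying interval containment instead of width as the inductive invariant is precisely the missing ingredient. For a retained entry, $[z'_{L,k},z'_{U,k}]\subseteq[z_{L,k},z_{U,k}]$ gives $\min\{W_{i,k}z'_{L,k},W_{i,k}z'_{U,k}\}\ge\min\{W_{i,k}z_{L,k},W_{i,k}z_{U,k}\}$, and dually for the max. A pruned entry contributes $0$, which lies between the original min and max whenever $0\in[z_{L,k},z_{U,k}]$.

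Hence your hypothesis can be localized. It is needed only at the source neurons of pruned weights, evaluated in the unpruned network: $L_k^{(j-1)}\le 0$ for $j\ge 2$, and $L_k^{(0)}\le 0\le U_k^{(0)}$ for $j=1$. Containment everywhere else is supplied by the induction.

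The bias-folding fallback is also valid, but it proves monotonicity for a different operation: masking plus shifting $b_i^{(j)}$ by a point of $W_{i,k}^{(j)}[z_{L,k},z_{U,k}]$. The statement instead describes pure masking with all remaining parameters fixed.

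Relative to the paper, you replace its unconditional (and false) claim with a correct one. Yours has an explicit hypothesis and a stronger conclusion, namely nested intervals rather than merely smaller widths.
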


\begin{proof}
\textbf{Step 1 (Affine layer width is linear in $|W|$).}
For any scalar $w$ and interval $[L,U]$,
$$
\max\{wL,wU\}-\min\{wL,wU\}=|w|\,(U-L).
$$
Applying this term-wise in the IA formulas yields
\begin{equation}\label{eq:affinewidth}
\Delta \mathbf{p}^{(j)} = |W^{(j)}|\,\Delta \mathbf{z}^{(j-1)},
\end{equation}
with $|\cdot|$ the elementwise absolute value. Biases cancel in widths.

\textbf{Step 2 (ReLU is nonexpansive for intervals).}
Since $\mathrm{ReLU}$ is monotone and 1-Lipschitz,
\begin{equation}\label{eq:reluwidth}
\Delta \mathbf{z}^{(j)} \le \Delta \mathbf{p}^{(j)}\
\end{equation}
componentwise (because $(\max\{0,U\}-\max\{0,L\}) \le (U-L)$).

\textbf{Step 3 (Effect of pruning on affine widths).}
Pruning zeroes some entries of $W^{(j)}$; hence $|W'^{(j)}|\le |W^{(j)}|$ elementwise.
Using \eqref{eq:affinewidth} for both networks and the nonnegativity of all widths,
$$
\Delta \mathbf{p}'^{(j)} = |W'^{(j)}|\,\Delta \mathbf{z}'^{(j-1)}
\;\le\; |W^{(j)}|\,\Delta \mathbf{z}'^{(j-1)}
\;\le\; |W^{(j)}|\,\Delta \mathbf{z}^{(j-1)}=\Delta \mathbf{p}^{(j)},
$$
where the second inequality follows by induction on $j$.

\textbf{Step 4 (Propagation through ReLU).}
From Step~3 we have that pruning yields narrower preactivation intervals,
$L'^{(j)} \ge L^{(j)}$ and $U'^{(j)} \le U^{(j)}$.
Since the ReLU function is monotone nondecreasing and
1-Lipschitz, applying it preserves this ordering of interval bounds:
\begin{equation}
\begin{aligned}        
&\mathbf{z}_L'^{(j)} = \text{ReLU}(L'^{(j)}) = \max(0,L'^{(j)}) \ge \max(0,L^{(j)}) = \mathbf{z}_L^{(j)},\\
&\mathbf{z}_U'^{(j)} = \text{ReLU}(U'^{(j)}) = \max(0,U'^{(j)}) \le \max(0,U^{(j)}) = \mathbf{z}_U^{(j)}.
\end{aligned}
\end{equation}

Subtracting the two inequalities gives
\begin{equation}
\Delta \mathbf{z}'^{(j)} = \mathbf{z}_U'^{(j)} - \mathbf{z}_L'^{(j)} \le \mathbf{z}_U^{(j)} - \mathbf{z}_L^{(j)} = \Delta \mathbf{z}^{(j)}.
\end{equation}
Hence the activation widths in the pruned network are no larger than those in the
unpruned network for every layer~$j$, completing the induction.
\end{proof}


\noindent Proposition~\ref{prop:monotonic-tightening} establishes a ``no-regret'' property of the \emph{pruning step itself}: when a subset of weights is set to zero while all remaining weights are kept fixed, the resulting IA-based big-$M$ intervals are at least as tight as before. Hence, the relaxation obtained immediately after pruning is at least as strong as that of the corresponding unpruned network. This result, however, does not directly extend to the full iterative pruning algorithm, since the subsequent retraining and optional final fine-tuning steps can modify the remaining weights and may therefore, in principle, widen the IA bounds again. Nevertheless, our empirical results indicate that the overall monotonic tightening trend persists in practice, even when retraining and fine-tuning are performed (see Section~\ref{sec:experiments}).

The following proposition identifies mild conditions under which pruning yields a strict reduction in the final bound width at an output of interest, i.e., pruning provably improves the continuous relaxation in B\&B.

\begin{proposition}[Strict tightening under mild conditions]
\label{prop:strict-tight}
Under the setting of Proposition~\ref{prop:monotonic-tightening}, fix an output coordinate of interest, denoted by $\mathrm{out}$, and let
$\Delta z_{\mathrm{out}}$ and $\Delta z'_{\mathrm{out}}$ denote the corresponding activation interval widths
in the unpruned and pruned networks, respectively. Assume there exist a layer $j^\star$ and indices $(i,k)$
such that the following conditions hold:
\begin{enumerate}
    \item \textbf{Nontrivial prune:} at least one nonzero weight $W_{i,k}^{(j^\star)}\neq 0$ is pruned to $0$;
    \item \textbf{Upstream signal range:} the input to the pruned weight is not fixed over the domain, i.e.,
    neuron $k$ in layer $j^\star-1$ satisfies $\Delta z_k^{(j^\star-1)}>0$;
    \item \textbf{Downstream influence (active connectivity):} the pruned connection feeds into the selected
    output through at least one intact computation path, i.e., there exists a directed path from neuron $i$
    in layer $j^\star$ to the output coordinate $\mathrm{out}$ using only nonzero weights in the unpruned network;
    \item \textbf{No dead/degenerate ReLUs on that path:} along that path, no neuron is provably always off
    (its preactivation interval lies entirely in $(-\infty,0]$) and no neuron has zero preactivation width.
\end{enumerate}
Then the interval width of the chosen output coordinate is strictly smaller after pruning:
\[
\boxed{\;\Delta z'_{\mathrm{out}} < \Delta z_{\mathrm{out}}\;}
\]
\end{proposition}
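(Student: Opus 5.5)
We would prove this by tracking a strict decrease at the pruned neuron and propagating it along the path of condition 3, using the width identities from the proof of Proposition~\ref{prop:monotonic-tightening}. By \eqref{eq:affinewidth}, $\Delta p'^{(j^\star)}_i = \sum_k |W'^{(j^\star)}_{i,k}|\,\Delta z'^{(j^\star-1)}_k$. Every term is bounded by the unpruned term, by Proposition~\ref{prop:monotonic-tightening} applied to layers below $j^\star$. The pruned term, however, drops from $|W^{(j^\star)}_{i,k}|\,\Delta z^{(j^\star-1)}_k>0$ (conditions 1 and 2) to $0$. Hence $\Delta p'^{(j^\star)}_i < \Delta p^{(j^\star)}_i$ strictly.

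Next we pass through the ReLU at neuron $i$. Monotonicity of ReLU only gives $\Delta z' \le \Delta z$, so here we need condition 4. If the unpruned interval $[L,U]$ has $U>0$ (not always off) and width positive, then in the pruned network $L'\ge L$, $U'\le U$, with $(U'-L')<(U-L)$. So either $U'<U$ or $L'>L$. If $L\ge 0$, ReLU is the identity on the relevant range and the width decrease transfers directly. If $L<0<U$, then $\Delta z = U$ and $\Delta z' = \max(0,U')-\max(0,L')$. This is strictly less than $U$ when $U'<U$ (using $U>0$) or when $L'>0$. The remaining subcase, $U'=U$ with $L<L'\le 0$, gives no strict decrease in $\Delta z$. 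This is the main obstacle.

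To handle it, we first try a cleaner invariant: prove strictness of the \emph{upper} (or lower) bound shift rather than of the width alone. Since \eqref{eq:upper_bound-IA} writes $U_i$ as a sum of terms $\max\{wL_k,wU_k\}$, removing the pruned term changes $U_i$ by $-\max\{wL_k,wU_k\}$. This term can be zero or negative, so strictness is not automatic. If this fails, we would interpret condition 4 (``no degenerate ReLUs'') as excluding exactly this configuration. Equivalently, we would strengthen it to require that each neuron on the path is either stably active ($L\ge0$) or that the width loss lands on the upper bound. We would state this explicitly as the intended reading of ``mild conditions.''

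Granting a strict decrease $\Delta z'^{(j)}_m<\Delta z^{(j)}_m$ at a path neuron $m$ in layer $j$, the next path neuron $m'$ in layer $j+1$ satisfies
\[
\Delta p'^{(j+1)}_{m'} = \sum_r |W'^{(j+1)}_{m',r}|\,\Delta z'^{(j)}_r \le \sum_r |W^{(j+1)}_{m',r}|\,\Delta z'^{(j)}_r < \sum_r |W^{(j+1)}_{m',r}|\,\Delta z^{(j)}_r = \Delta p^{(j+1)}_{m'}.
\]
The strict step uses $W^{(j+1)}_{m',m}\neq 0$ from condition 3. This requires that this edge is not itself pruned; if it is, the first inequality is already strict, provided $\Delta z'^{(j)}_m$ or another pruned contribution is positive, and we would handle that case by the same argument as at $j^\star$. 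We then repeat the ReLU step above at $m'$, and induct along the path until reaching $\mathrm{out}$. For a linear output layer, the final preactivation-width inequality suffices.
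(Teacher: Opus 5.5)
Your argument has the same skeleton as the paper's proof. The deleted term $|W^{(j^\star)}_{i,k}|\,\Delta z^{(j^\star-1)}_k>0$ forces $\Delta p'^{(j^\star)}_i<\Delta p^{(j^\star)}_i$; one then passes through the ReLU and propagates along the path.

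The paper handles every ReLU step with a single appeal to Condition~4 (``this strict decrease in preactivation width induces a strict decrease in activation width''). Your subcase $L<0<U$, $U'=U$, $L<L'\le 0$ is exactly where that appeal fails, and in fact the proposition is false as written. Take $\mathbf{x}\in[0,1]^2$, $p^{(1)}=x_1-x_2$, $p^{(2)}=z^{(1)}$ (zero biases), and prune $W^{(1)}_{1,2}=-1$. The first-layer preactivation interval moves from $[-1,1]$ to $[0,1]$. Yet $\Delta z^{(1)}=1$ and $\Delta z_{\mathrm{out}}=\Delta z^{(2)}=1$ in both networks, while Conditions~1--4 all hold. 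This is the generic effect of pruning a negative weight out of a neuron with $z_L=0$ (only $L$ moves), not a corner case.

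So no argument can close your subcase. Strengthening Condition~4 is necessary, not a matter of interpretation. You are also right that the strengthening must hold at every neuron on the path: a drop in $z_U$ fed through a negative weight again only raises the next lower bound, and can be absorbed again.

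What your repaired case analysis still lacks is the nesting $L'\ge L$, $U'\le U$. You assert it, as Step~4 of the proof of Proposition~\ref{prop:monotonic-tightening} does, but it does not follow from the width inequality. Your own remark shows why: if the deleted term $\max\{wL_k,wU_k\}$ is negative, then $U'_i>U_i$. Concretely, take $x_1\in[0,1]$, $x_2\in[1,2]$, $p^{(1)}=11x_1-x_2-9$, with the same identity layer appended. Pruning the $-1$ moves $[-11,1]$ to $[-9,2]$, so $\Delta z_{\mathrm{out}}$ \emph{increases} from $1$ to $2$, although Conditions~1--4 hold. This also contradicts the activation-width claim of Proposition~\ref{prop:monotonic-tightening}, which you invoke for the layers below $j^\star$ and for all other widths along the path.

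Nesting, and with it Proposition~\ref{prop:monotonic-tightening}, does follow by induction if every pruned weight leaves a neuron whose unpruned activation interval contains $0$. That covers hidden neurons with $L\le 0$ and inputs whose box contains $0$, since then each deleted term $w\,[z_{L,k},z_{U,k}]$ contains $0$. With that as an explicit extra hypothesis, your argument proves the strengthened statement. Stably active path neurons act as the identity on both intervals because $L'\ge L\ge 0$, and the others lose width through $U$.

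Two minor points. Your worry about a pruned path edge is unnecessary, since your strict step uses the unpruned weight $|W_{m',m}|\neq 0$. And in this setting the output layer also carries a ReLU, so the last step is one more ReLU step, which your induction already covers.
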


\begin{proof}
By Proposition~\ref{prop:monotonic-tightening}, pruning cannot increase interval widths anywhere in the network. In particular,
\[
\Delta z_i'^{(j)} \le \Delta z_i^{(j)}
\qquad
\text{for all layers } j=1,\dots,N_L \text{ and neurons } i=1,\dots,N_N^{(j)}.
\]

At layer $j^\star$, the affine-width identity \eqref{eq:affinewidth} gives
\[
\Delta p_i^{(j^\star)}
=
\sum_{m=1}^{N_N^{(j^\star-1)}} |W_{i,m}^{(j^\star)}|\,\Delta z_m^{(j^\star-1)},
\]
and analogously for the pruned network,
\[
\Delta p_i'^{(j^\star)}
=
\sum_{m=1}^{N_N^{(j^\star-1)}} |W_{i,m}'^{(j^\star)}|\,\Delta z_m'^{(j^\star-1)}.
\]
Since pruning sets $W_{i,k}^{(j^\star)}$ to zero and does not increase any upstream widths,
\[
\Delta p_i'^{(j^\star)}
\le
\sum_{m\ne k} |W_{i,m}^{(j^\star)}|\,\Delta z_m^{(j^\star-1)}
=
\Delta p_i^{(j^\star)} - |W_{i,k}^{(j^\star)}|\,\Delta z_k^{(j^\star-1)}.
\]
By Conditions~1 and~2, the subtracted term is strictly positive, and therefore
\[
\Delta p_i'^{(j^\star)} < \Delta p_i^{(j^\star)}.
\]

By Condition~4, the ReLU at neuron $i$ is not degenerate on the domain, so this strict decrease in
preactivation width induces a strict decrease in activation width:
\[
\Delta z_i'^{(j^\star)} < \Delta z_i^{(j^\star)}.
\]

By Condition~3, neuron $i$ influences the selected output coordinate through at least one intact downstream path.
Along that path, all affine transformations depend monotonically on incoming widths through nonnegative coefficients,
and Condition~4 ensures that no intermediate ReLU collapses the transmitted variation. Since all other widths are
nonincreasing under pruning, the strict decrease at neuron $(j^\star,i)$ propagates to the chosen output coordinate.
Hence,
\[
\Delta z'_{\mathrm{out}} < \Delta z_{\mathrm{out}}.
\]
\end{proof}
In other words, the pruning step reduces absolute weight magnitudes in the IA propagation, which narrows preactivation intervals; ReLU then cannot expand these intervals, so the shrinkage persists, and under mild non-degeneracy assumptions, becomes strict and propagates to the output.
In turn, narrower preactivation bounds translate into smaller big-$M$ gaps and a tighter LP relaxation, which is leveraged by B\&B for fast convergence.
In Section~\ref{sec:analysis_experiments_formulation}, we report substantial reductions in bound widths and LP relaxation gaps for pruned networks, even after fine-tuning, although this condition is not guaranteed in general.\\
While IA provides computationally efficient bound estimates, LP-based bound tightening can further strengthen these bounds by solving optimization sub-problems for each neuron.
However, the computational cost of LP-based methods scales poorly with network size.
For pruned networks, the monotonic IA tightening shown above already improves the relaxation at essentially no additional cost and can, in some cases, contribute to more efficient optimization.
\FloatBarrier
\section{Experiments}
\label{sec:experiments}
We present a comprehensive empirical study of optimization problems with embedded pruned NNs solved to global optimality. 
In addition to reporting outcomes in terms of computational performance and predictive accuracy, we investigate the underlying factors that drive these results. 
In particular, we examine how pruning affects the resulting optimization formulations and the propagation of bounds within them. 
Finally, we demonstrate the practical benefits of adopting pruning strategies in engineering applications.\\
All training experiments are performed on a NVIDIA RTX™ 6000 Ada Generation 48 GB GPU using the PyTorch library~\cite{Paszke2019_PyTorchImperativeStyle}.
Every training run is repeated 5 times with a different initialization seed to evaluate the robustness of the analysis.
Deterministic global optimization over the trained NNs is performed using Gurobi Optimizer version 12.0.3~\cite{GurobiOptimization2024_GurobiOptimizerReference} on an Intel(R) Xeon(R) w5-3435X CPU, with 16 physical cores and 32 logical processors, running Ubuntu 22.04.5 LTS operating system.
The algebraic formulation of the ReLU networks is done through the Pyomo library~\cite{Hart2011_Pyomomodelingsolvinga, Bynum2021_Pyomooptimizationmodeling} and using the \texttt{ReluBigMFormulation} formulation in the OMLT library~\cite{Ceccon2022_OMLTOptimization&}.\\
Note that throughout this section, ``final sparsity'' denotes the fraction of weights removed in weight pruning and the fraction of neurons removed in node pruning.

\subsection{Illustrative test function}
We study unstructured (weights) and structured (nodes) pruning using a test function, namely the \textit{peaks} function ($f_{\text{peaks}}$). The peaks function is used here as an illustrative benchmark that allows us to analyze the effect of pruning in a controlled setting. We do not claim that training a surrogate for this standalone low-dimensional function is, by itself, the most natural application of surrogate-based optimization; rather, the example serves to isolate the computational effects of pruning before moving to the more realistic process case study. The peaks function is nonlinear, with multiple local minima, and is often used to test global optimization. The function can be expressed by $f_{\text{peaks}}:\mathbb{R}^2 \rightarrow \mathbb{R}$, with:
\begin{equation}
    \label{eq:peaks_function}
f_{\text{peaks}}(x_1, x_2)
= 3(1 - x_1)^2 e^{-x_1^{2} - (x_2 + 1)^{2}}
\;-\; 10\left(\frac{x_1}{5} - x_1^{3} - x_2^{5}\right) e^{-x_1^{2} - x_2^{2}}
\;-\; \frac{1}{3} e^{-(x_1 + 1)^{2} - x_2^{2}}.
\end{equation}

We consider the peaks function over the domain $\mathcal{A}:=[-3,3]^2$, over which, the function exhibits multiple local minima. The global minimum occurs at $(0.228, -1.626)$ with value $-6.551$.
We train fully-connected NNs with ReLU activation to learn the peaks function on $\mathcal{A}$. We generate 5000 data points for training and validation and 500 data points for testing, using Latin hypercube sampling. The 5000 data points are split in 80\%/20\% for training and validation. The used training hyperparameters are listed in Appendix~\ref{app:trainig_parameters}. Notably, we use L2 regularization with a factor $\lambda=5 \times 10^{-7}$, selected through a preliminary grid search that compared both L1 and L2 regularization across multiple parameter values.\\
We finally aim to solve the following optimization problem to global optimality:
\begin{equation}
    \label{eq:peaks_optimization_problem}
    \begin{aligned}
    &\min_{x_1, x_2 \in \mathcal{A}} y
    &\text{s.t.} \quad y = h_{\theta}^{\text{peaks}}(x_1, x_2)
    \end{aligned}
\end{equation}
by replacing the true peaks function $f_{\text{peaks}}$ with the trained neural network surrogate model $h_{\theta}^{\text{peaks}}$.\\
We are interested in analyzing the effect of pruning on different network configurations, such as \textit{deep and narrow}, \textit{shallow and wide}, and \textit{medium-sized} architectures. Importantly, we select configurations with a similar total number of parameters, as this provides a consistent measure of the complexity of the resulting optimization problem and a fair baseline for comparison. Table~\ref{tab:nn_architectures} presents the selected architectures, each containing around 200,000 parameters and differing in the number of hidden layers (\textit{depth}) and the number of neurons per hidden layer (\textit{width}). The use of an overparameterized model (relative to the simplicity of the target function) is motivated by the pruning philosophy, which holds that starting from a large network and subsequently pruning it yields superior performance compared to training a smaller model from the outset~\cite{Arora2018_OptimizationDeepNetworks, Pham2025_OptimizationoverTrained}.

\begin{table}[h!]
\centering
\caption{Neural network architectures used in our study, chosen to maintain comparable parameter counts while exhibiting distinct depth–width trade-offs.}
\begin{tabular}{lcccc}
\toprule
Architecture & Depth & Width & \# parameters \\
\midrule
Shallow \& Wide & 2 & 455 & 209{,}301 \\
Deep \& Narrow & 20 & 102 & 200{,}023 \\
Medium-Sized & 6  & 199 & 199{,}797 \\
\bottomrule
\end{tabular}
\label{tab:nn_architectures}
\end{table}

We perform both weight and node iterative pruning across different ReLU network configurations and study the resulting performance and formulations. We maintain a constant relative pruning rate at every iteration and prune the network to different degrees of sparsity. Table~\ref{tab:pruning_parameters_peaks} summarizes the pruning parameters used in our experiments for both weight and node iterative pruning. After reaching the desired sparsity, the pruned networks undergo a fine-tuning step.

\begin{table}[h!]
\centering
\caption{Summary of pruning parameters used across unstructured (weight) and structured (node) pruning experiments. The final sparsity in the weight pruning experiments refers to the percentage of removed \textit{weights}, while in node pruning refers to the percentage of removed \textit{nodes} or \textit{neurons}.}
\begin{tabular}{lccc}
\toprule
Pruning parameter & Weight pruning & Node pruning \\
\midrule
Final sparsity ($s_f$)  & 80\%,\: 90\%,\: 95\%,\: 99\% & 60\%,\: 70\%,\: 80\%,\: 90\% \\
Relative pruning rate ($s_r$)  & 25\% (weights)  & 25\% (nodes) \\
\bottomrule
\end{tabular}
\label{tab:pruning_parameters_peaks}
\end{table}

\subsubsection{Impact of pruning on regression accuracy}
We evaluate the regression accuracy by calculating the Mean Absolute Percentage Error (MAPE) between the true peaks function values and the predictions of the neural network. The MAPE is defined as:
\begin{equation}
\mathrm{MAPE}[\%]
= \frac{100}{n} \sum_{i=1}^{n}
\frac{\left|y_i - \hat{y}_i\right|}{\max\left(|y_i|, \varepsilon\right)},
\end{equation}
where $n$ is the number of samples of the independent test set ($n=500$, in this case) and $\varepsilon$ is a small constant to avoid numerical overflow.\\
We analyze how removing parameters via pruning affects the network's final regression error. In Figure~\ref{fig:mape_change_pruning}, we report the MAPE variation of the pruned networks with respect to the unpruned baseline.
In general, we can prune all the architectures up to 95\% of the weights and 80\% of the nodes with a negligible increase in the MAPE (< 1\%).
Notably, the deep and narrow architecture (102$\times$20) exhibits higher sensitivity to aggressive pruning compared to wider architectures, especially in the node pruning case (MAPE increase of 4\% at 90\% sparsity, see Figure~\ref{fig:mape_node_pruning}).
This is likely because node pruning reduces the network to very narrow layers, limiting its representational capacity.
\begin{figure}[h!]
    \centering
    \begin{subfigure}{0.48\linewidth}
        \centering
        \includegraphics[width=\linewidth]{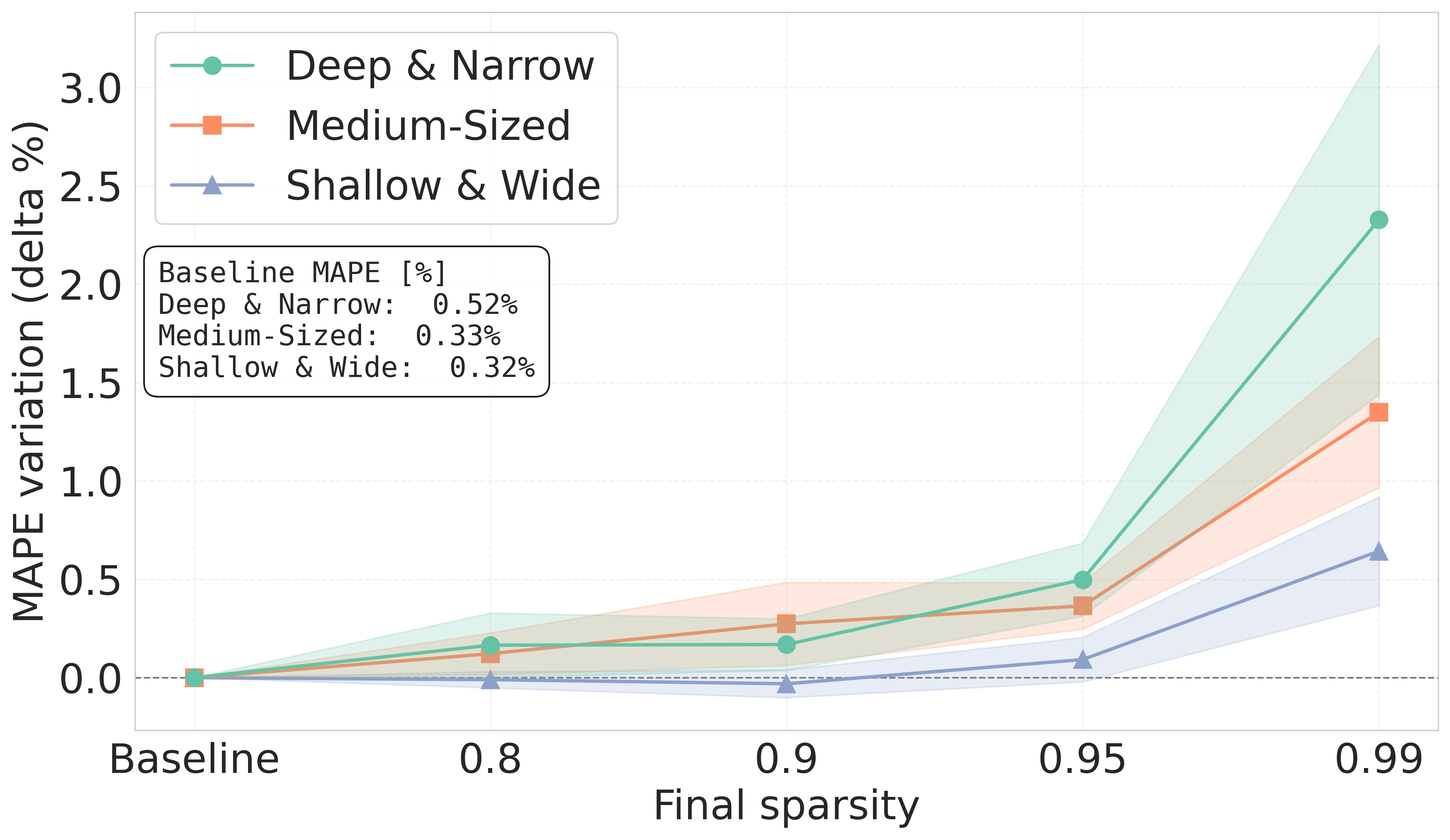}
        \caption{Weight pruning}
        \label{fig:mape_weight_pruning}
    \end{subfigure}
    \hfill
    \begin{subfigure}{0.48\linewidth}
        \centering
        \includegraphics[width=\linewidth]{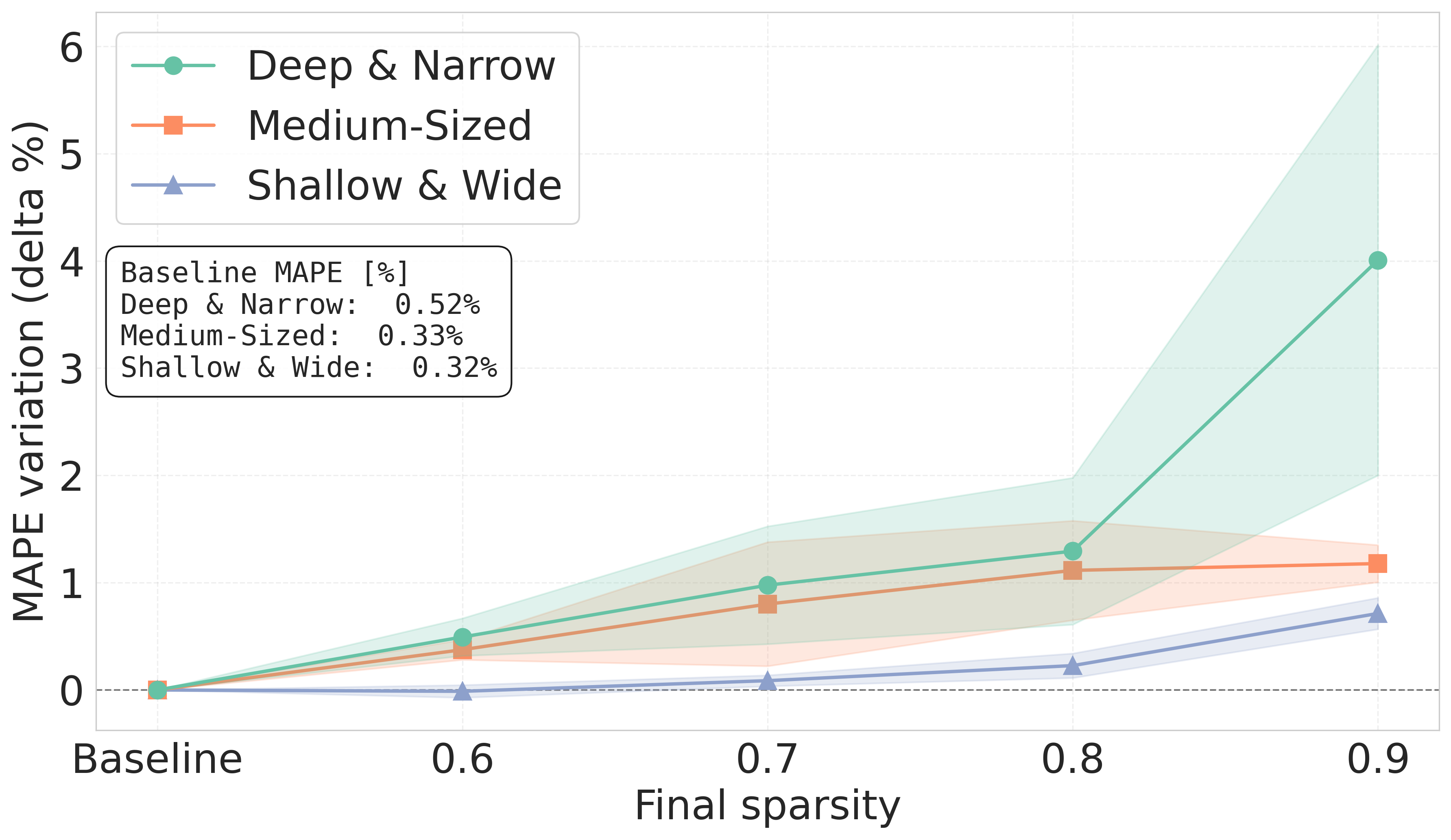}
        \caption{Node pruning}
        \label{fig:mape_node_pruning}
    \end{subfigure}
    \caption{Comparison of final regression accuracy measured relative to the unpruned baseline across different pruning levels.}
    \label{fig:mape_change_pruning}
\end{figure}

\subsubsection{Computational performance}
\label{sec:exp-comp-perf}
We measure the CPU time required to solve the optimization problems with Gurobi under varying pruning levels, and we observe acceleration of up to three to four orders of magnitude.
Figure~\ref{fig:speed_pruning} shows that all unpruned baseline models fail to converge within the 7,200-second time limit.
Mild pruning suffices for convergence of the shallow and wide network, whereas the deep network requires 99\% weight pruning and at least 80\% node pruning, with only 90\% yielding a meaningful (3-fold) computational speedup.\\
In general, the shallow, wide architecture (455$\times$2) converges fastest, achieving lower optimization times at lower sparsity levels than deeper networks.
However, under severe pruning, deeper architectures reach comparable or even superior performance, though at the expense of higher MAPE.
We attribute this effect to suboptimal IA-driven bound propagation in deep networks, which is alleviated by pruning.
Node pruning achieves convergence at lower sparsity levels than weight pruning because it guarantees the complete removal of neurons and their associated integer variables.
Overall, in terms of accuracy--speed trade-offs, weight pruning is more favorable for deep and narrow architectures (up to a four orders of magnitude speed-up with $<2.5\%$ MAPE increase), whereas node pruning is more effective for shallow and wide or medium-sized architectures (equivalent speed up with $<1\%$ MAPE increase).
The results confirm that pruning significantly reduces computational burden in optimization with embedded NNs, especially relevant for applications requiring repeated optimization runs, such as model predictive control or real-time decision-making systems.\\
In principle, a surrogate model may also be trained for a single optimization run, in which case the surrogate training time must be included in the overall computational cost. For simple standalone test functions such as the peaks function, however, this setting is mainly illustrative; the practical benefits of surrogate-based optimization are more interesting when the surrogate is embedded in a larger optimization problem or reused across repeated solves.
Figure~\ref{fig:speed_total} decomposes the total computational effort into training and optimization time for each architecture and pruning strategy.
Depending on the architecture and downstream network application, an optimal sparsity level can be identified.
For instance, considering a wide and shallow architecture and one-shot optimization tasks, a moderate sparsity level (e.g., 60\% node pruning) yields the best trade-off between training and optimization time.
For deep and narrow architectures in repeated optimization settings, the objective is to minimize optimization time, even at the expense of a higher one-time training cost; therefore, aggressive pruning (e.g., 99\% weight pruning) is recommended.
Overall, although pruning can increase training cost by up to two orders of magnitude, this overhead is typically outweighed by the reduction in optimization time, resulting in a lower total computational effort compared to the unpruned case.\\
Figure~\ref{fig:speed_mape} illustrates the trade-off between accuracy (reported as MAPE variation from the unpruned baseline) and computational time in pruned networks.
In Figure~\ref{fig:speed_mape_opt}, we report the optimization time versus the increase in MAPE, illustrating the trade-off through Pareto fronts (particularly evident for the wide and shallow network). Depending on the specific application and requirements, an appropriate pruning sparsity level can be selected to balance optimization runtime and model error.
Figure~\ref{fig:speed_mape_tot} shows that pruned networks consistently achieve the minimum total time (training plus optimization).
For the deep and narrow architecture, weight pruning at $s=0.99$ provides the most favorable trade-off, with a MAPE increase below $2.5\%$.
In contrast, for the medium and shallow architectures, node pruning achieves similar or lower total times with smaller accuracy degradation (below $1\%$).
These results confirm that the preferred pruning strategy depends on architectural characteristics: unstructured weight pruning is more effective in deep, narrow networks, whereas structured node pruning is more advantageous in wider architectures.\\
From a practical perspective, when the surrogate is trained once and deployed in repeated optimization runs (e.g., MPC), optimization time dominates and aggressive pruning is clearly beneficial.
When training is performed for a single optimization instance, pruning can be considered, provided the selected sparsity level balances the accuracy degradation and the total time reduction.

\begin{figure}[h!]
    \centering
    \begin{subfigure}{0.48\linewidth}
        \centering
        \includegraphics[width=\linewidth]{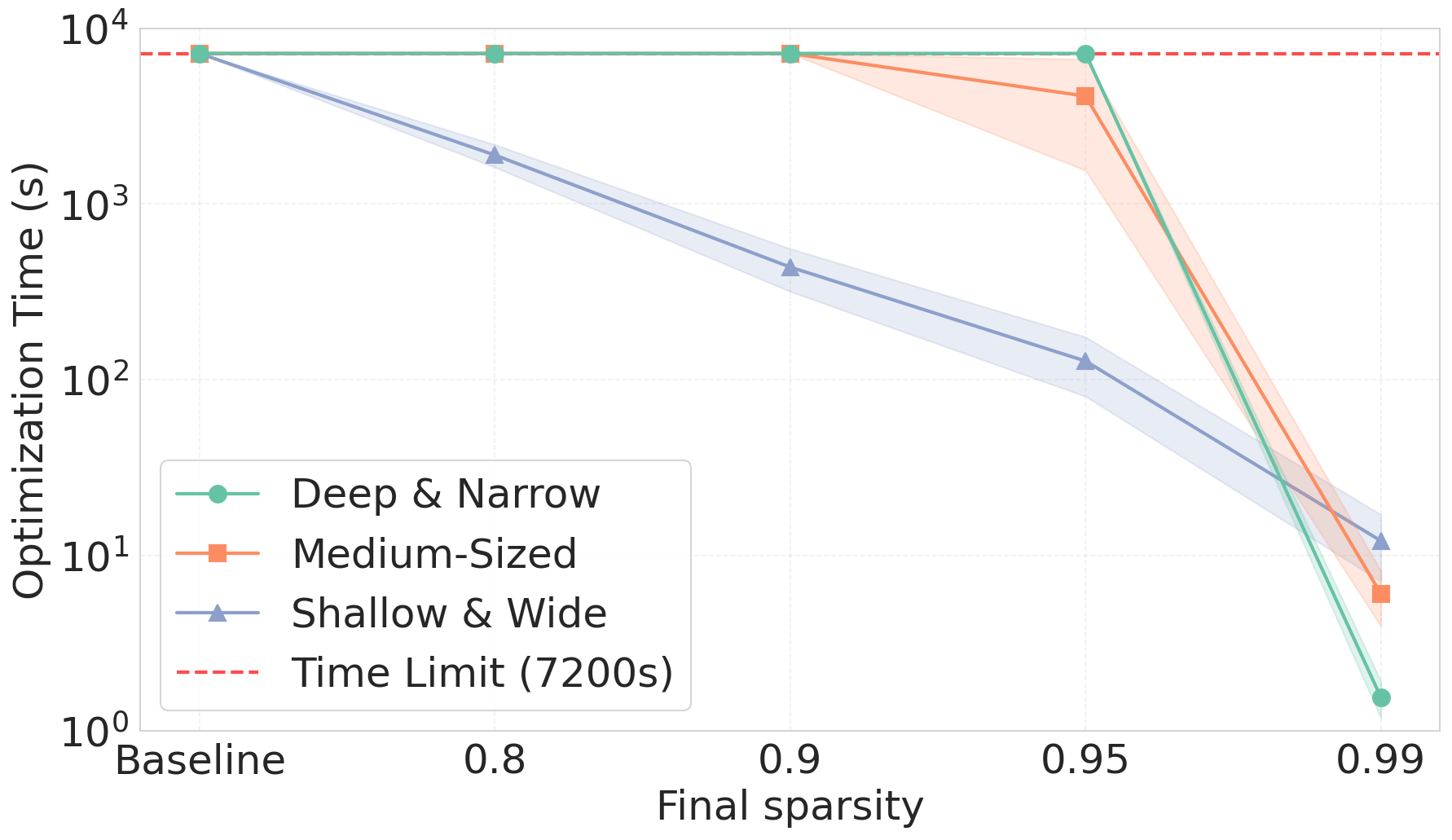}
        \caption{Weight pruning}
        \label{fig:speed_weight_pruning}
    \end{subfigure}
    \hfill
    \begin{subfigure}{0.48\linewidth}
        \centering
        \includegraphics[width=\linewidth]{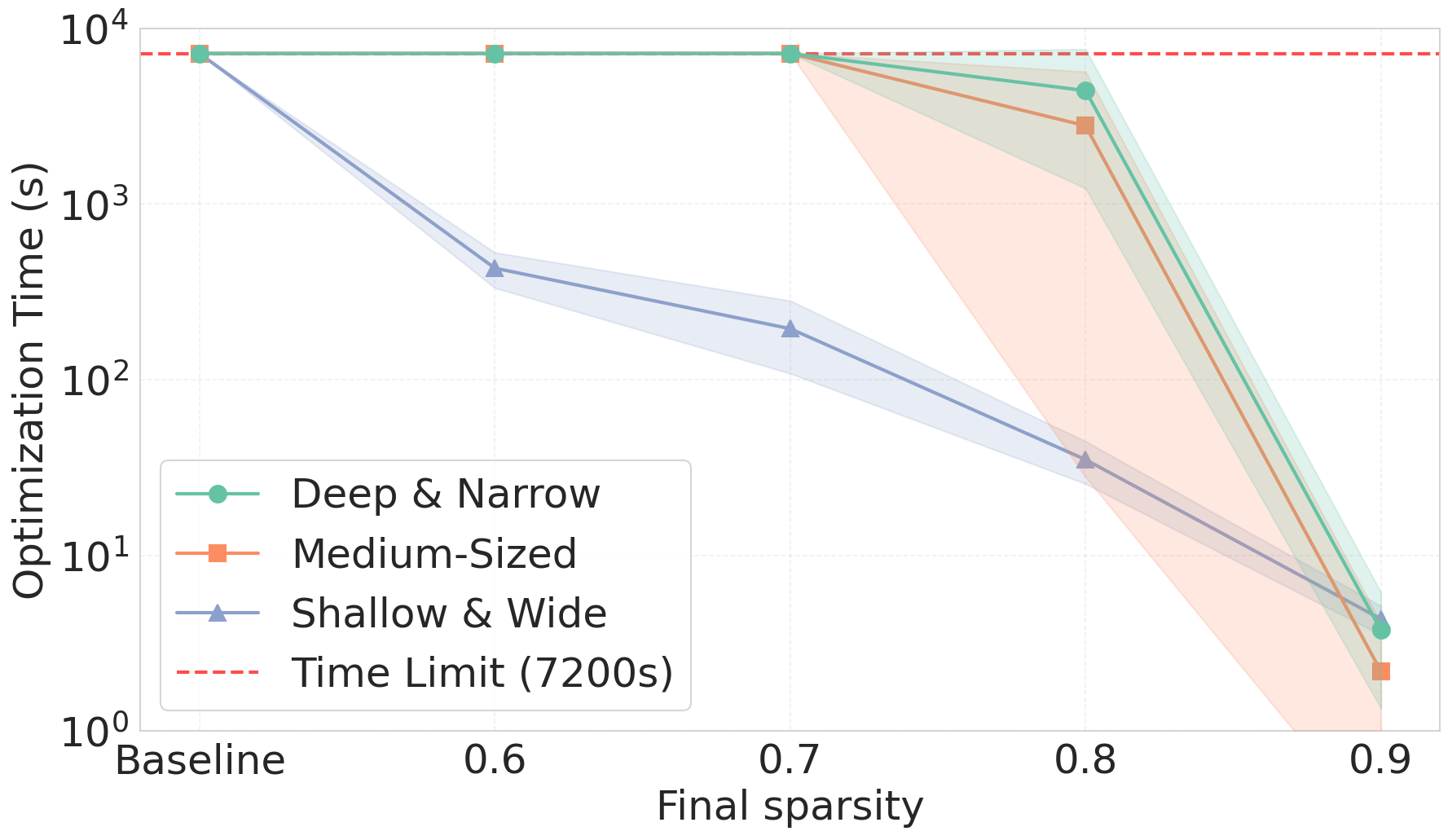}
        \caption{Node pruning}
        \label{fig:speed_node_pruning}
    \end{subfigure}
    \caption{Optimization time as a function of pruning level for different network architectures. The horizontal dashed line indicates the time limit of 7,200 seconds. Weight pruning (a) requires aggressive sparsity (99\%) for convergence in deep networks, while node pruning (b) achieves convergence at lower sparsity levels (80\%) across all architectures.}
    \label{fig:speed_pruning}
\end{figure}

\begin{figure}[h!]
    \centering
    \includegraphics[width=0.85\linewidth]{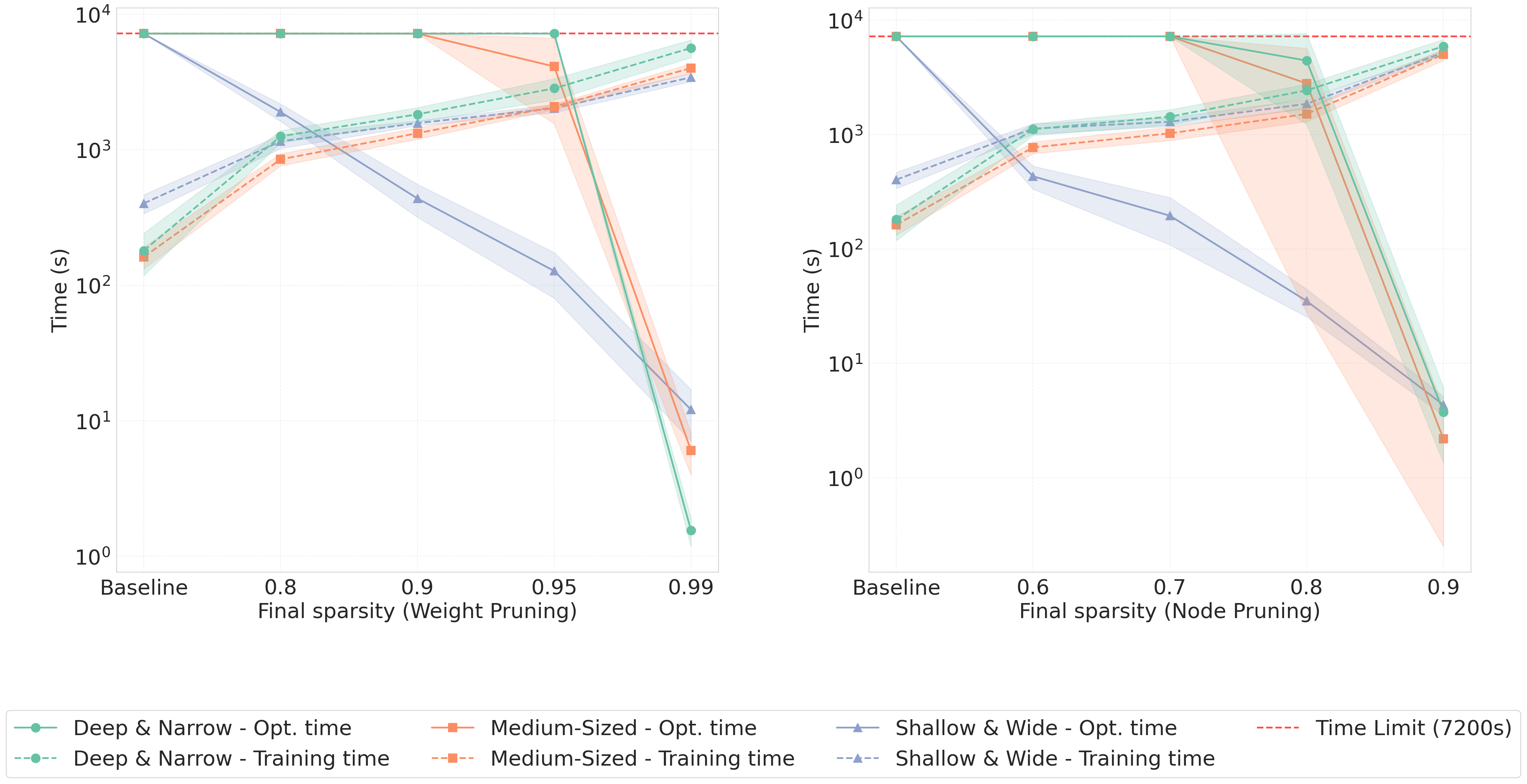}
    \caption{Total time decomposition (training vs.\ optimization) for each architecture and pruning strategy.
            For shallow and wide networks in one-shot optimization, moderate sparsity (e.g., 60\% node pruning) provides the best trade-off, whereas for deep and narrow architectures in repeated optimization settings, aggressive pruning (e.g., 99\% weight pruning) minimizes overall runtime despite higher training cost.}
    \label{fig:speed_total}
\end{figure}


\begin{figure}[h!]
    \centering
    \begin{subfigure}{0.48\linewidth}
        \centering
        \includegraphics[width=\linewidth]{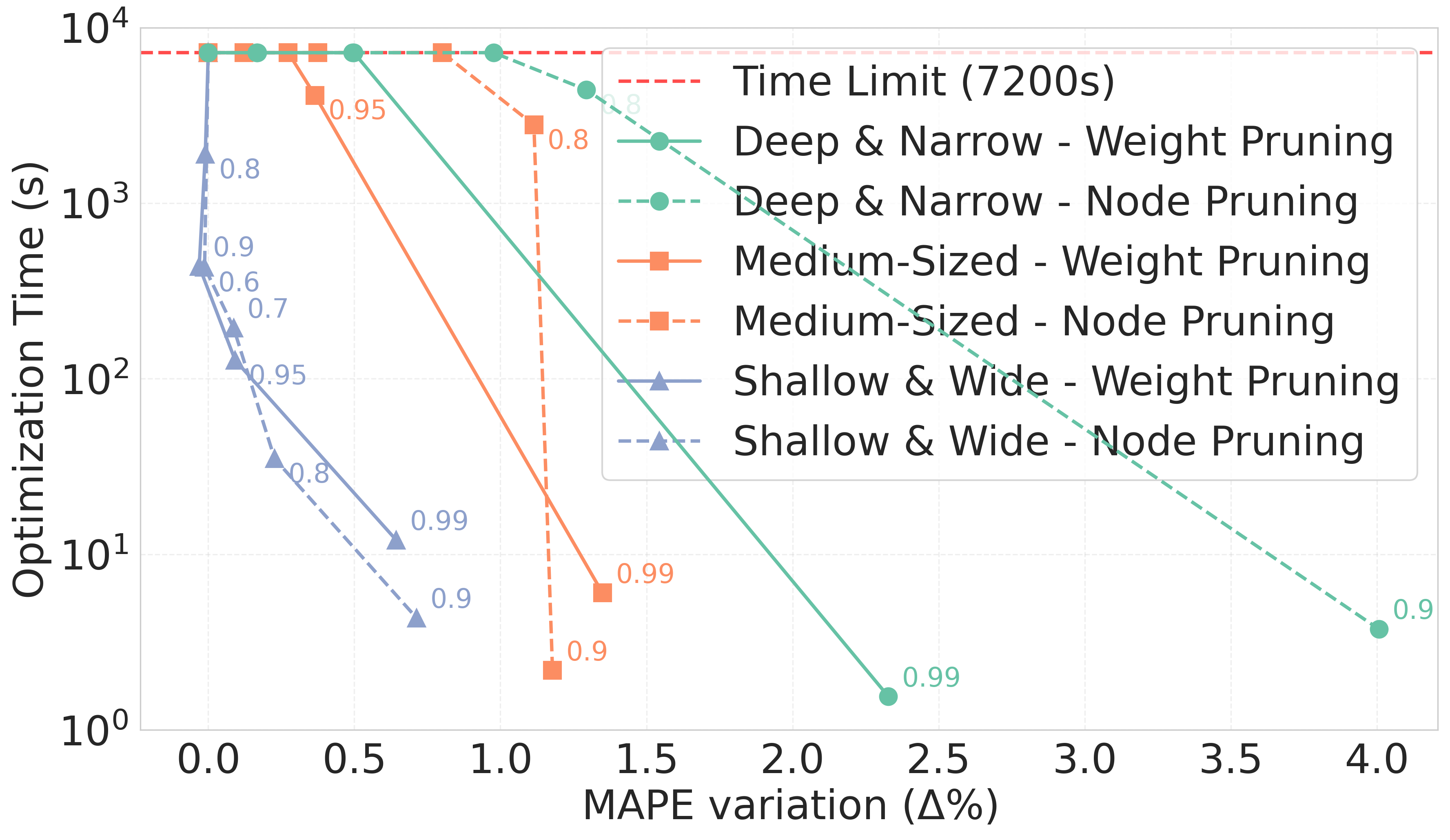}
        \caption{}
        \label{fig:speed_mape_opt}
    \end{subfigure}
    \hfill
    \begin{subfigure}{0.48\linewidth}
        \centering
        \includegraphics[width=\linewidth]{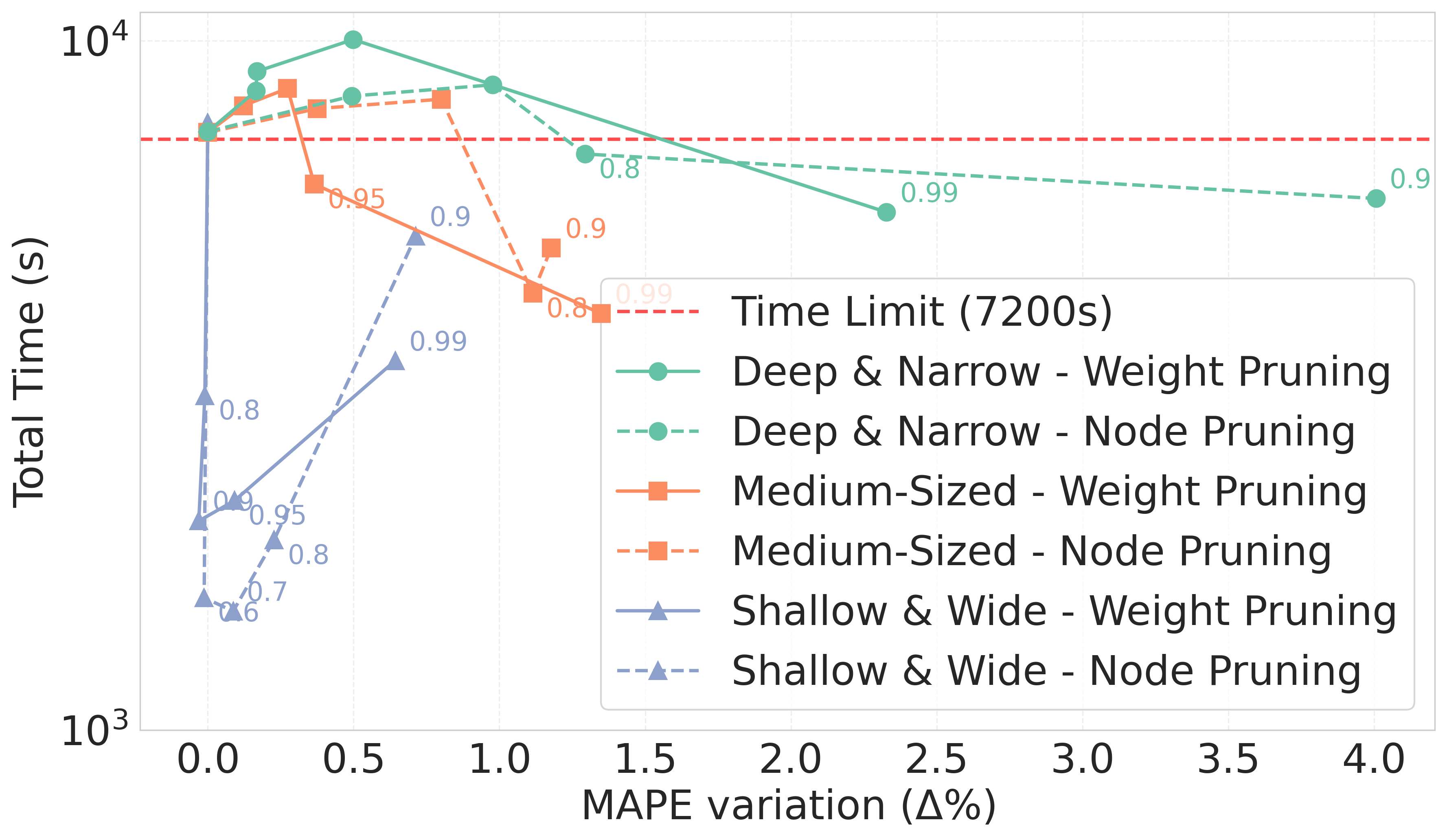}
        \caption{}
        \label{fig:speed_mape_tot}
    \end{subfigure}
    \caption{Trade-off between predictive accuracy (MAPE increase from the unpruned baseline) and computational cost for different pruning strategies and network architectures. Figure (a) reports the optimization runtime, while Figure (b) reports the total time comprising training and optimization.}
    \label{fig:speed_mape}
\end{figure}

\subsubsection{Analysis of optimization formulation}
\label{sec:analysis_experiments_formulation}
We provide the reader with an empirical validation of the mechanisms underlying the improved computational performance observed in global optimization over trained and pruned networks. As discussed in Section~\ref{sec:problem_formulation_sparsity}, we argue that the theoretical explanation resides in key properties of the big-M formulation of ReLU networks, including (i) problem size, (ii) the number of integer variables, and (iii) the tightness of the big-M bounds. In the following, we present a comprehensive empirical analysis of these factors to support our hypothesis.

\paragraph{Problem size and number of integer variables}
During preprocessing, the solver simplifies the problem formulation by eliminating variables and constraints whose parameters have been masked by pruning.
Tables~\ref{tab:presolved_problem_size_reduction_weight} and~\ref{tab:presolved_problem_size_reduction_node} report the reduction in problem size after the preprocessing phase of the MILP solver for different architectures under varying sparsity levels.
For weight pruning, the deep and narrow architecture (102$\times$20) exhibits the highest reduction in integer variables, reaching over 91\% at 99\% sparsity, compared to 76\% for the medium-sized (199$\times$6) and 61\% for the shallow (455$\times$2) architecture.
Unsurprisingly, node pruning yields uniform percentage reductions across architectures, with values consistent with the induced sparsity.
Overall, the reduction in variables and constraints scales with the target sparsity for both pruning strategies, though node pruning achieves comparable reductions at lower sparsity levels than weight pruning.\\
The reduction in integer variables (cf. Eq.~\eqref{eq:bigMformulation}) directly shrinks the combinatorial space explored by the B\&B algorithm, since each binary variable $\delta_i^{(j)}$ corresponds to a branching decision in the search tree.
Consequently, fewer binary variables lead to fewer branch-and-cut operations to reach optimality, as discussed in Section~\ref{sec:problem_formulation_sparsity}.
The larger reduction in integer variables for the deep and narrow network under weight pruning suggests a higher probability of completely isolating neurons during unstructured pruning.
In narrow layers, pruning a substantial fraction of weights has a higher chance of cutting all connections to a downstream neuron, effectively resulting in implicit node removal.
Beyond combinatorial complexity, the size of the LP itself also affects solver performance.\\
Figure~\ref{fig:solver_simplex_it_root_LPgap} (left) shows the average number of simplex iterations per second as a function of sparsity.
Simplex iteration throughput increases monotonically with sparsity for all architectures under both pruning strategies, showing that smaller LPs enable faster matrix factorization and inversion in the simplex method.
Notably, node pruning has a stronger impact, yielding up to a tenfold increase in iteration speed compared to weight pruning at comparable sparsity levels.
The gap between strategies is more pronounced for wider architectures, whereas it is smaller for deep and narrow networks, particularly at high sparsity.
This behavior arises because node pruning fully removes the variables and constraints associated with pruned neurons.
At high weight sparsity, deep and narrow architectures experience substantial implicit node removal, thereby narrowing the difference relative to node pruning.
These observations confirm the theoretical expectations regarding the impact of sparsity-induced problem size reduction on solver performance (cf. Section~\ref{sec:problem_formulation_sparsity}).

\begin{table}[htbp]
\centering
\caption{Problem size reduction (\%) after preprocessing (presolve) phase of the MILP solver for networks pruned using the \emph{weight} pruning technique. Values represent the percentage reduction in the number of variables, constraints, and integer variables after the solver's preprocessing step. Mean and standard deviation are reported across all training runs.}
\label{tab:presolved_problem_size_reduction_weight}
\begin{tabular}{llcccc}
\toprule
\multicolumn{2}{c}{} & \multicolumn{4}{c}{Final sparsity (weight pruning)} \\
\cmidrule{3-6}
Architecture & Formulation & 0.8 & 0.9 & 0.95 & 0.99 \\
\midrule
\multirow{3}{*}{Deep \& Narrow} & \# Vars & $32 \pm 2$ & $41 \pm 2$ & $53 \pm 2$ & $90 \pm 1$ \\
 & \# Cons & $32 \pm 2$ & $42 \pm 2$ & $54 \pm 2$ & $90 \pm 1$ \\
 & \# Int. Vars & $32 \pm 2$ & $42 \pm 2$ & $54 \pm 2$ & $91 \pm 1$ \\
\midrule
\multirow{3}{*}{Medium-Sized} & \# Vars & $27 \pm 1$ & $35 \pm 2$ & $46 \pm 2$ & $77 \pm 1$ \\
 & \# Cons & $27 \pm 1$ & $35 \pm 2$ & $47 \pm 2$ & $77 \pm 1$ \\
 & \# Int. Vars & $27 \pm 1$ & $35 \pm 2$ & $47 \pm 2$ & $76 \pm 1$ \\
\midrule
\multirow{3}{*}{Shallow \& Wide} & \# Vars & $16 \pm 1$ & $28 \pm 1$ & $43 \pm 1$ & $66 \pm 1$ \\
 & \# Cons & $15 \pm 1$ & $27 \pm 1$ & $42 \pm 1$ & $64 \pm 1$ \\
 & \# Int. Vars & $12 \pm 1$ & $23 \pm 2$ & $37 \pm 1$ & $60 \pm 1$ \\
\bottomrule
\end{tabular}
\end{table}

\begin{table}[htbp]
\centering
\caption{Problem size reduction (\%) after preprocessing (presolve) phase of the MILP solver for networks pruned using the \emph{node} pruning technique. Values represent the percentage reduction in the number of variables, constraints, and integer variables after the solver's preprocessing step. Mean and standard deviation are reported across all training runs.}
\label{tab:presolved_problem_size_reduction_node}
\begin{tabular}{llcccc}
\toprule
\multicolumn{2}{c}{} & \multicolumn{4}{c}{Final sparsity (node pruning)} \\
\cmidrule{3-6}
Architecture & Formulation & 0.6 & 0.7 & 0.8 & 0.9 \\
\midrule
\multirow{3}{*}{Deep \& Narrow} & \# Vars & $59.73 \pm 0.05$ & $70.50 \pm 0.06$ & $79.33 \pm 0.05$ & $91.13 \pm 0.45$ \\
 & \# Rows & $59.72 \pm 0.07$ & $70.50 \pm 0.09$ & $79.32 \pm 0.07$ & $91.26 \pm 0.51$ \\
 & \# Int. Vars & $59.64 \pm 0.14$ & $70.42 \pm 0.17$ & $79.24 \pm 0.14$ & $91.55 \pm 0.69$ \\
\midrule
\multirow{3}{*}{Medium-Sized} & \# Vars & $59.64 \pm 0.14$ & $69.60 \pm 0.14$ & $79.70 \pm 0.13$ & $90.34 \pm 0.18$ \\
 & \# Rows & $59.63 \pm 0.20$ & $69.53 \pm 0.20$ & $79.67 \pm 0.19$ & $90.34 \pm 0.22$ \\
 & \# Int. Vars & $59.48 \pm 0.37$ & $69.22 \pm 0.37$ & $79.44 \pm 0.36$ & $90.19 \pm 0.37$ \\
\midrule
\multirow{3}{*}{Shallow \& Wide} & \# Vars & $58.60 \pm 0.50$ & $69.00 \pm 0.50$ & $79.25 \pm 0.50$ & $89.47 \pm 0.49$ \\
 & \# Rows & $58.05 \pm 0.72$ & $68.58 \pm 0.72$ & $79.01 \pm 0.73$ & $89.40 \pm 0.72$ \\
 & \# Int. Vars & $56.44 \pm 1.32$ & $67.28 \pm 1.32$ & $78.18 \pm 1.32$ & $88.98 \pm 1.30$ \\
\bottomrule
\end{tabular}
\end{table}

\paragraph{Bound tightening}
Figure~\ref{fig:bounds} shows the average big-M width $U_i^{(j)} - L_i^{(j)}$ across all neurons for different architectures and sparsity levels. For both weight and node pruning, the bound width decreases monotonically with increasing sparsity, consistent with Proposition~\ref{prop:monotonic-tightening} and Proposition~\ref{prop:strict-tight} (cf. Section~\ref{sec:bound_tightening_pruning}), even when a final fine-tuning step is performed. The same trend is observed for each individual trained network, as illustrated in Figure~\ref{fig:bounds-individual}, which shows that the bound widths decrease monotonically with increasing sparsity across all training runs.
The deep and narrow architecture (102$\times$20) shows the most substantial reductions, with bound widths decreasing by several orders of magnitude under aggressive pruning.
We refer the user to Appendix~\ref{app:additional_results}, Figure~\ref{fig:bounds_deep_NN} for a detailed analysis of the bound tightening effect across layers and neurons.
The medium-sized architecture (199$\times$6) exhibits intermediate behavior, while the shallow architecture (455$\times$2) shows the smallest absolute reductions due to fewer layers for bound propagation.\\
The final fine-tuning step does not lead to significant changes in the magnitudes of the unpruned parameters, particularly given the small learning rate (Appendix~\ref{app:trainig_parameters}); consequently, it does not counteract the monotonic tightening of the Big-M bounds induced by pruning (cf. Section~\ref{sec:bound_tightening_pruning}).
The stronger bound tightening in deep networks is driven by the compounding effect of IA through successive layers, as described by Eq.~\eqref{eq:affinewidth}.
Each pruned weight reduces the interval width at that layer, and this reduction propagates through all subsequent layers.
Tighter bounds strengthen the continuous relaxation of the MILP formulation, as illustrated in Figure~\ref{fig:solver_simplex_it_root_LPgap}.\\
Figure~\ref{fig:solver_simplex_it_root_LPgap} (right) presents the root node LP relaxation gap, defined as the distance between the lower bound (LP relaxation) and the best integer solution found.
The relaxation gap decreases significantly with increasing sparsity, particularly for deep networks. For the 102$\times$20 architecture, weight pruning reduces the gap by approximately 12 orders of magnitude already at 95\% sparsity compared to the unpruned baseline.
Node pruning achieves an approximately 11-fold reduction at 80\% sparsity for the deep network. The shallow architecture maintains consistently low relaxation gaps even without pruning, explaining its relatively fast optimization times.\\
The reduction in relaxation gap indicates a stronger (tighter) LP relaxation, which provides better lower bounds during the B\&B search.
Tighter lower bounds enable earlier pruning of suboptimal branches in the search tree, reducing the number of nodes that must be explored to prove optimality.
This mechanism explains the speedup in optimization time observed in Figure~\ref{fig:speed_pruning}, as the solver requires fewer branching operations to converge.

\begin{figure}[h!]
    \centering
    \begin{subfigure}{0.35\linewidth}
        \centering
        \includegraphics[width=\linewidth]{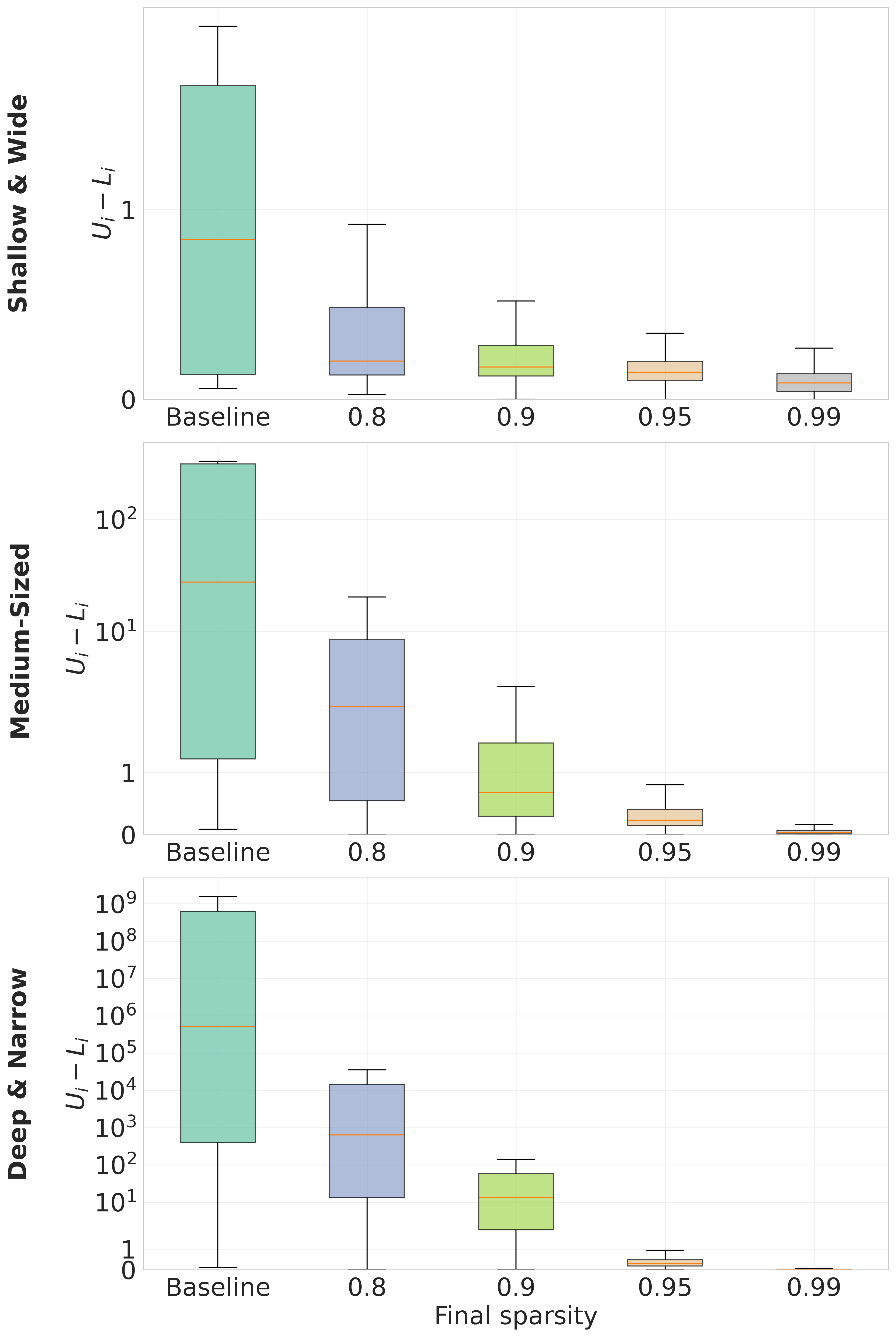}
        \caption{Weight pruning}
        \label{fig:bounds_weight_pruning}
    \end{subfigure}
    \hspace{1cm}
    \begin{subfigure}{0.35\linewidth}
        \centering
        \includegraphics[width=\linewidth]{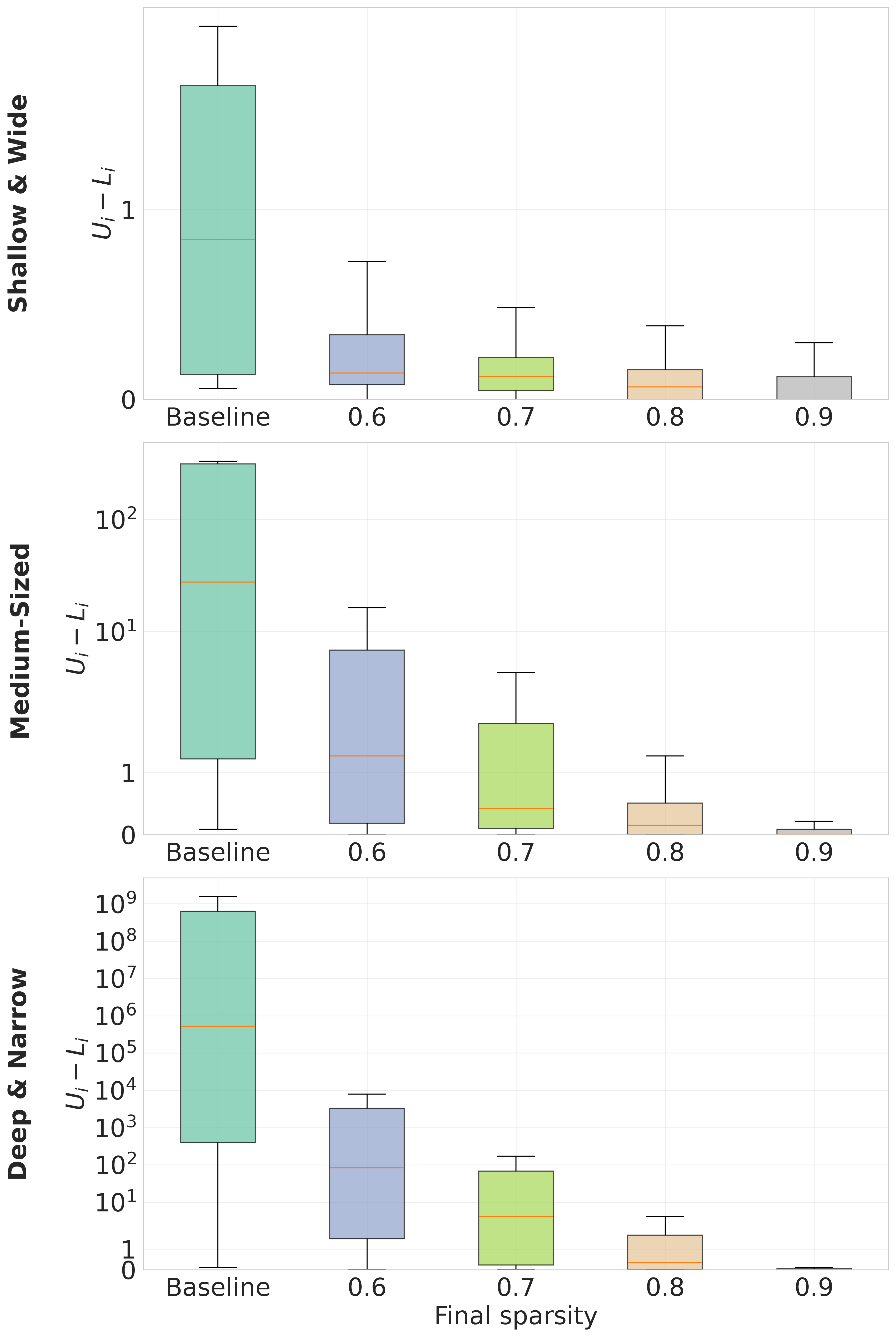}
        \caption{Node pruning}
        \label{fig:bounds_node_pruning}
    \end{subfigure}
    \caption{Average interval width $U_i - L_i$ of big-M bounds across all neurons as a function of sparsity. Pruning monotonically reduces bound widths, with deep networks benefiting most due to compounding effects through multiple layers during interval arithmetic bound propagation.}
    \label{fig:bounds}
\end{figure}

\begin{figure}[h!]
    \centering
    \begin{subfigure}{0.9\linewidth}
        \centering
        \includegraphics[width=0.9\linewidth]{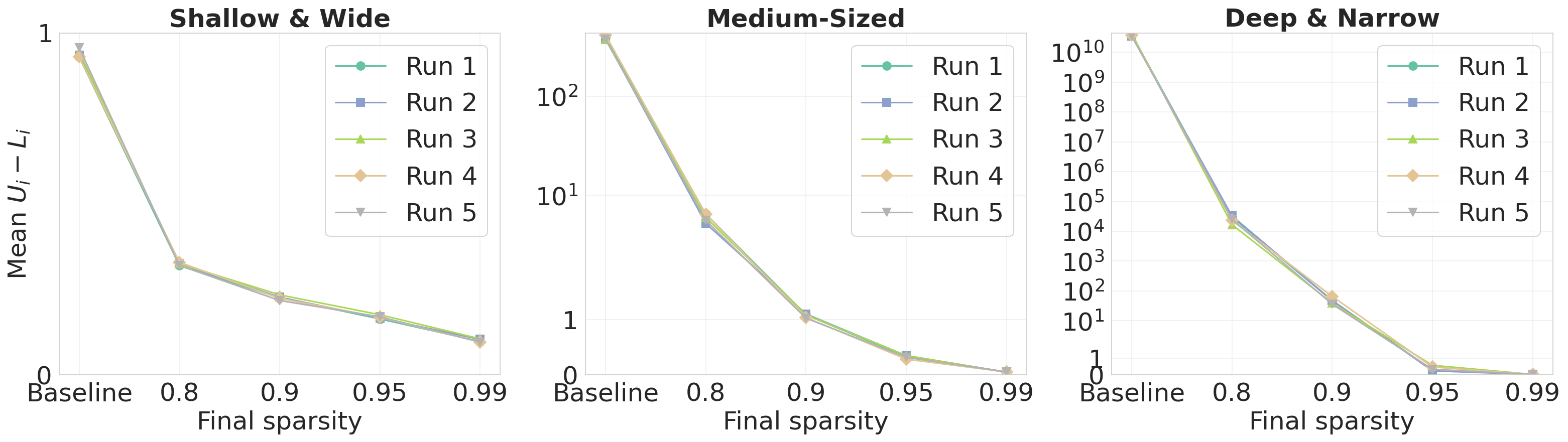}
        \caption{Weight pruning}
        \label{fig:bounds-individual_weight_pruning}
    \end{subfigure}

    \vspace{0.5cm}

    \begin{subfigure}{0.9\linewidth}
        \centering
        \includegraphics[width=0.9\linewidth]{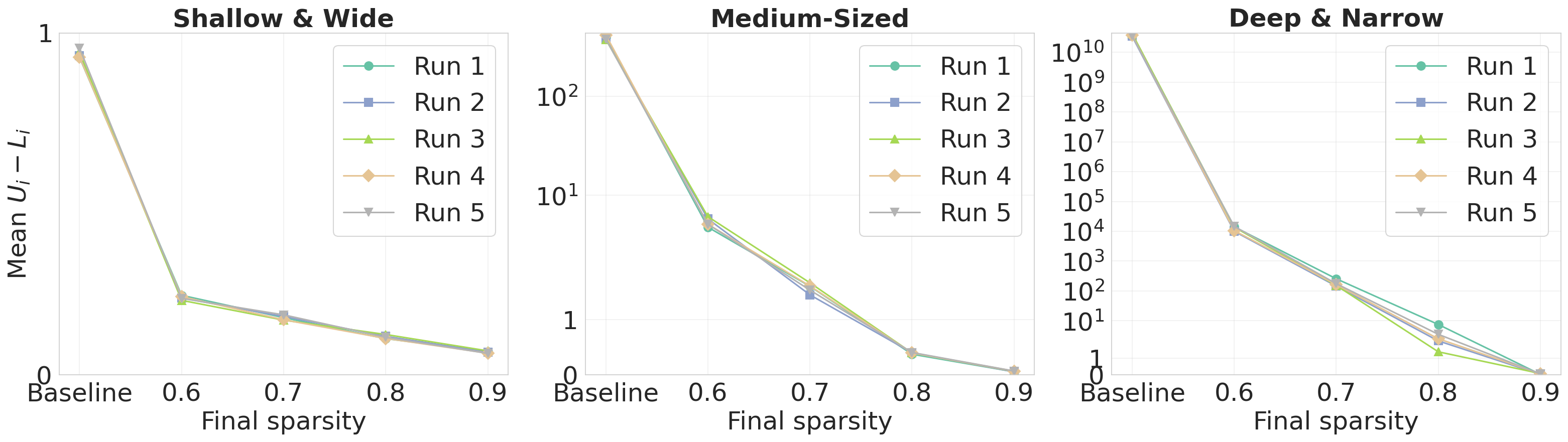}
        \caption{Node pruning}
        \label{fig:bounds-individual_node_pruning}
    \end{subfigure}
    \caption{Monotonic decrease of average big-$M$ widths with increasing sparsity for individual training runs.}
    \label{fig:bounds-individual}
\end{figure}

\begin{figure}[h!]
    \centering
        \includegraphics[width=0.8\linewidth]{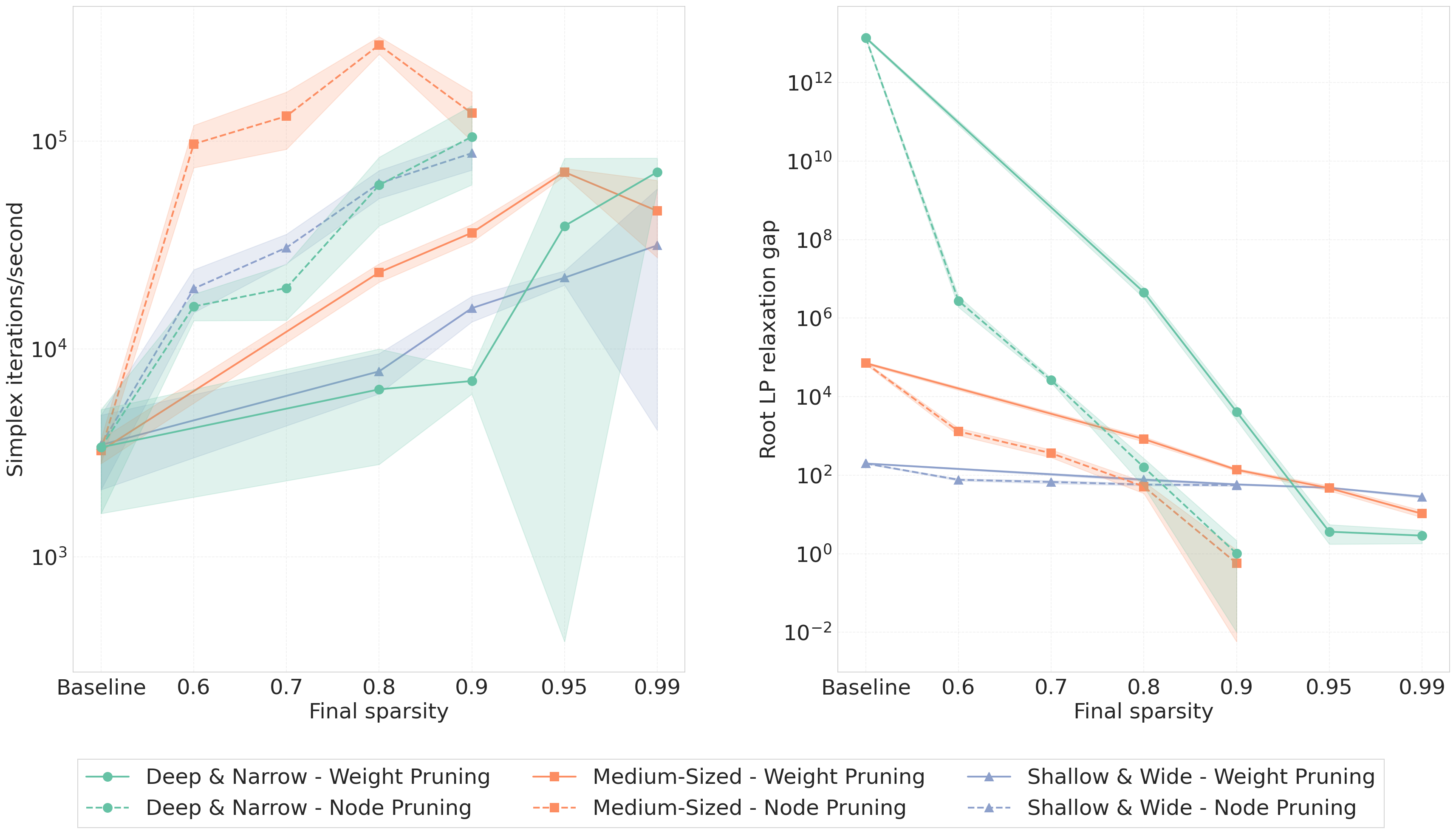}
        \caption{Impact of sparsity on Gurobi solver performance. Higher sparsity reduces the problem size, enabling faster simplex iterations per second (left), while tighter big-M bounds from pruning strengthen the LP relaxation, reducing the root node gap (right) and accelerating branch and bound convergence by enabling earlier pruning of suboptimal branches.}
    \label{fig:solver_simplex_it_root_LPgap}
\end{figure}

\subsection{Optimization of a chemical process flowsheet}
To validate the engineering applicability of our findings beyond the illustrative test function, we apply both pruning strategies to a realistic case study involving surrogate-based optimization of a chemical process flowsheet.

\subsubsection{Problem description}
We apply the two pruning methods to a neural network surrogate representing a chemical process flowsheet.
We use an autothermal reformer (ATR) flowsheet as a case study, as reported in the documentation of the OMLT library~\cite{OMLT_ATR_Example} and previously adapted by Bugosen et al.~\cite{Bugosen2024_ProcessFlowsheetOptimization}.
The main objective of the autothermal reformer is to produce syngas, a mixture primarily composed of H$_2$ and CO.
The process flowsheet is illustrated in Figure~\ref{fig:flowsheet}.
A mixture of natural gas, steam, and air is fed into the ATR reactor, where the reforming reactions occur.
The hot syngas exiting the reformer is circulated through a heat exchanger (feed-effluent heat exchanger) to preheat the incoming natural gas feed.
The preheated natural gas is then expanded to generate electrical power before being fed into the reactor, closing the loop.
A bypass valve regulates the fraction of natural gas fed into the reformer versus passed around it, while air is compressed and cooled in a two-stage compression process before entering the reformer.
The objective of the case study is to maximize the molar fraction of hydrogen (x$_{\text{H}_2}$) in the product stream. 

\begin{figure}[h!]
    \centering
    \includegraphics[width=0.8\linewidth]{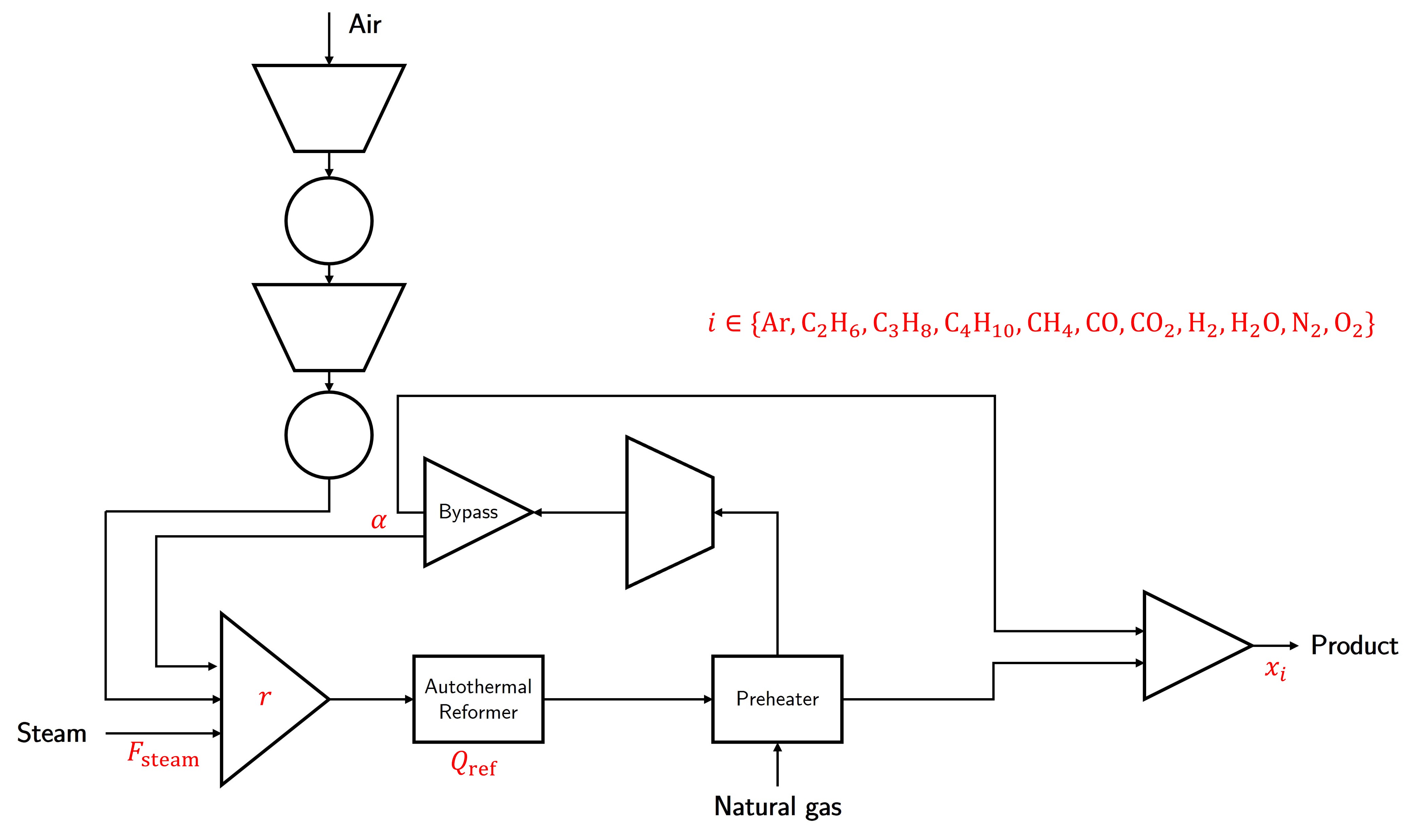}
    \caption{Process flow diagram of the autothermal reformer flowsheet. Natural gas, steam, and air are fed into the reformer reactor to produce syngas. The hot product stream preheats the natural gas feed through a heat exchanger before expansion for power generation.}
    \label{fig:flowsheet}
\end{figure}

We train a neural network surrogate model representing the entire flowsheet, with two inputs corresponding to the degrees of freedom: (1) the bypass fraction ($\alpha$), regulating how much natural gas is fed into the reformer versus bypassed, and (2) the natural gas to steam ratio ($r$). 
The network has 13 outputs: the steam flow rate ($F_{\text{steam}}$), the reformer heat duty ($Q_{\text{ref}}$), and 11 outputs representing the molar fractions ($x_i$) of the product stream components (Ar, C$_2$H$_6$, C$_3$H$_8$, C$_4$H$_{10}$, CH$_4$, CO, CO$_2$, H$_2$, H$_2$O, N$_2$, O$_2$). 
The complete list of inputs and outputs are provided in Appendix~\ref{app:additional_results} (Table~\ref{tab:nn_surrogate_inputs} and Table~\ref{tab:nn_surrogate_outputs}, respectively).\\
NNs are trained on the dataset provided by OMLT~\cite{OMLT_ATR_Example}, comprising 2,500 data points split 80\%/20\% into training and validation sets.
An additional independent test dataset with 300 data points is generated for the final evaluation of model accuracy on unseen data.\\
For this surrogate model, we employ a fully connected ReLU network equivalent to the \textit{medium-sized} architecture presented in the illustrative case study (Table~\ref{tab:nn_architectures}), such as a network with 6 layers of 199 neurons each.
All training parameters are reported in Appendix~\ref{app:trainig_parameters}.
The trained neural network surrogate is embedded into an optimization problem to be solved deterministically to global optimality through Pyomo and OMLT, with the objective of maximizing the hydrogen content of the product stream.
The optimization problem is formulated as follows:
\begin{equation}
    \label{eq:reformer_opt}
    \begin{aligned}
        \max_{\alpha, r} \quad & x_{\text{H}_2} \\
        \text{s.t.} \quad & \mathbf{y} = h_{\mathbf{\theta}}(\alpha, r) \\
        & x_{\text{N}_2} \leq 0.34 \\
        & 0.1 \leq \alpha \leq 0.8 \\
        & 0.8 \leq r \leq 1.2,
    \end{aligned}
\end{equation}
where $\alpha$ is the bypass fraction, $r$ is the natural gas to steam ratio, $\mathbf{y}$ is the vector of neural network outputs, and $h_{\mathbf{\theta}}(\alpha, r)$ represents the neural network surrogate with trained parameters $\mathbf{\theta}$.
The neural network is formulated using the big-M formulation for ReLU networks (Eq.~\eqref{eq:bigMformulation}) as implemented in the OMLT library.

\subsubsection{Pruning speeds up optimization in engineering}
Figure~\ref{fig:reformer_optimization_time} presents the optimization time for the flowsheet problem under varying pruning levels.
Pruning accelerates the flowsheet optimization problem by up to three to four orders of magnitude, consistent with the results observed in the illustrative peaks function case study.
Conversely, the unpruned baseline network fails to converge within the 7,200-second time limit.
Weight pruning achieves convergence starting at 90\% sparsity, with optimization times decreasing monotonically as sparsity increases. 
At 99\% sparsity, the average optimization time reaches approximately 1 second. 
Networks undertaking node pruning converge starting from a 70\% neurons reduction, achieving average optimization times of 1 second at 90\%.\\
The optimization acceleration demonstrates the practical applicability of pruning for engineering optimization tasks, where neural network surrogates are increasingly used to replace expensive first-principles models.
These results are particularly relevant for applications requiring repeated optimization runs, such as model predictive control of chemical processes, real-time operational optimization, or parametric studies for process design.
The ability to solve previously intractable optimization problems within seconds rather than hours could unlock new possibilities for online optimization and control in chemical engineering applications.

\subsubsection{Pruning can improve surrogate accuracy}
Figure~\ref{fig:reformer_mape_combined} shows the MAPE variation relative to the unpruned baseline for the flowsheet surrogate model. Contrary to the peaks function case study, where pruning caused a slight accuracy degradation, both pruning strategies improve the prediction accuracy of the flowsheet surrogate.\\
This improvement suggests that the unpruned network overfits the flowsheet dataset. Such a regularization effect is a well-known motivation for pruning in the machine learning literature, where pruning can improve generalization by reducing overparametrization and removing redundant parameters. In the present setting, this observation highlights an additional benefit of pruning beyond computational advantages in downstream optimization, such as improving the quality of surrogate model itself.

\begin{figure}[h!]
    \label{fig:reformer_time_mape}
    \centering
    \begin{subfigure}{0.48\linewidth}
        \centering
        \includegraphics[width=\linewidth]{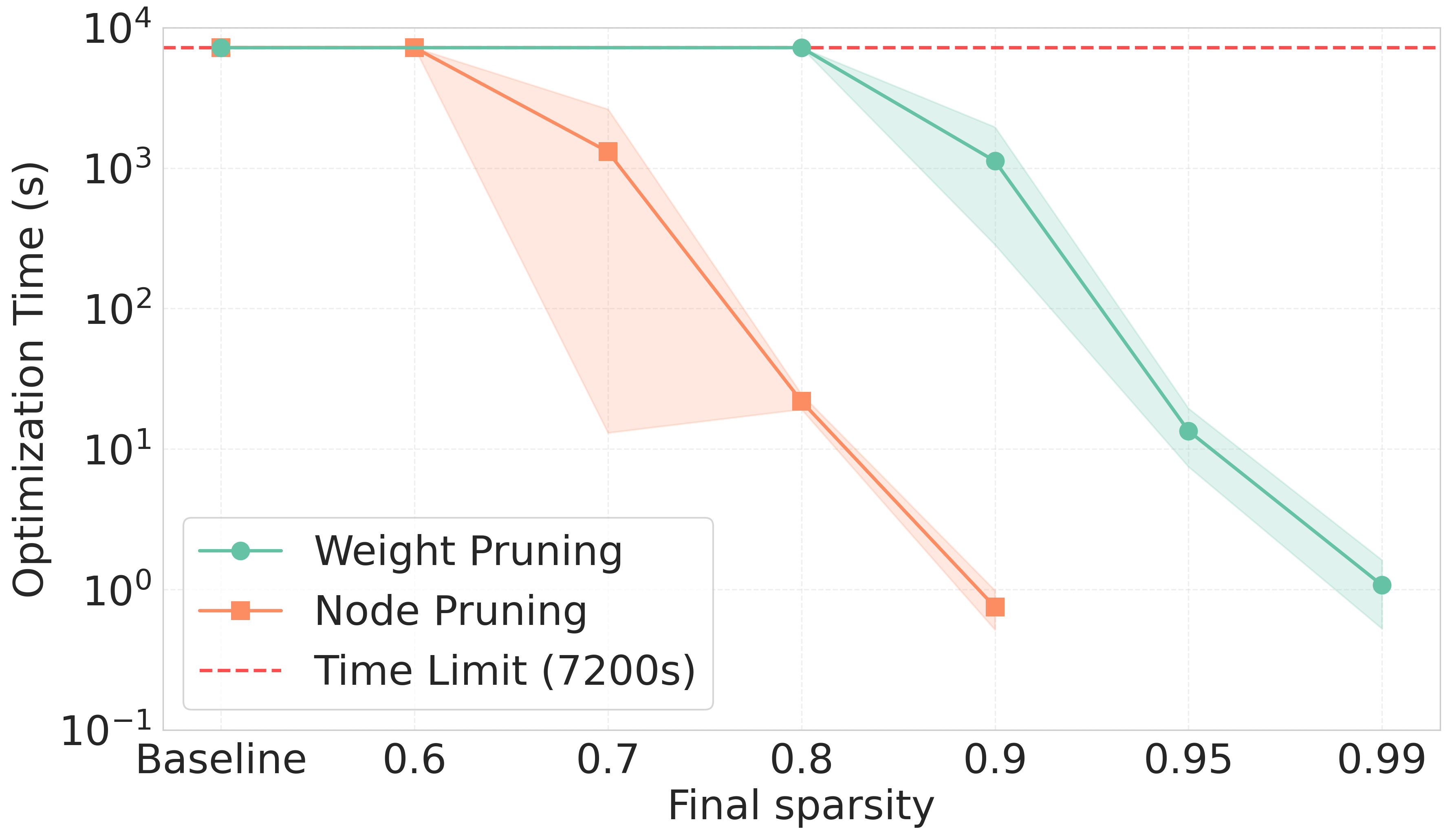}
        \caption{Optimization time as a function of sparsity.}
        \label{fig:reformer_optimization_time}
    \end{subfigure}
    \hfill
    \begin{subfigure}{0.48\linewidth}
        \centering
        \includegraphics[width=\linewidth]{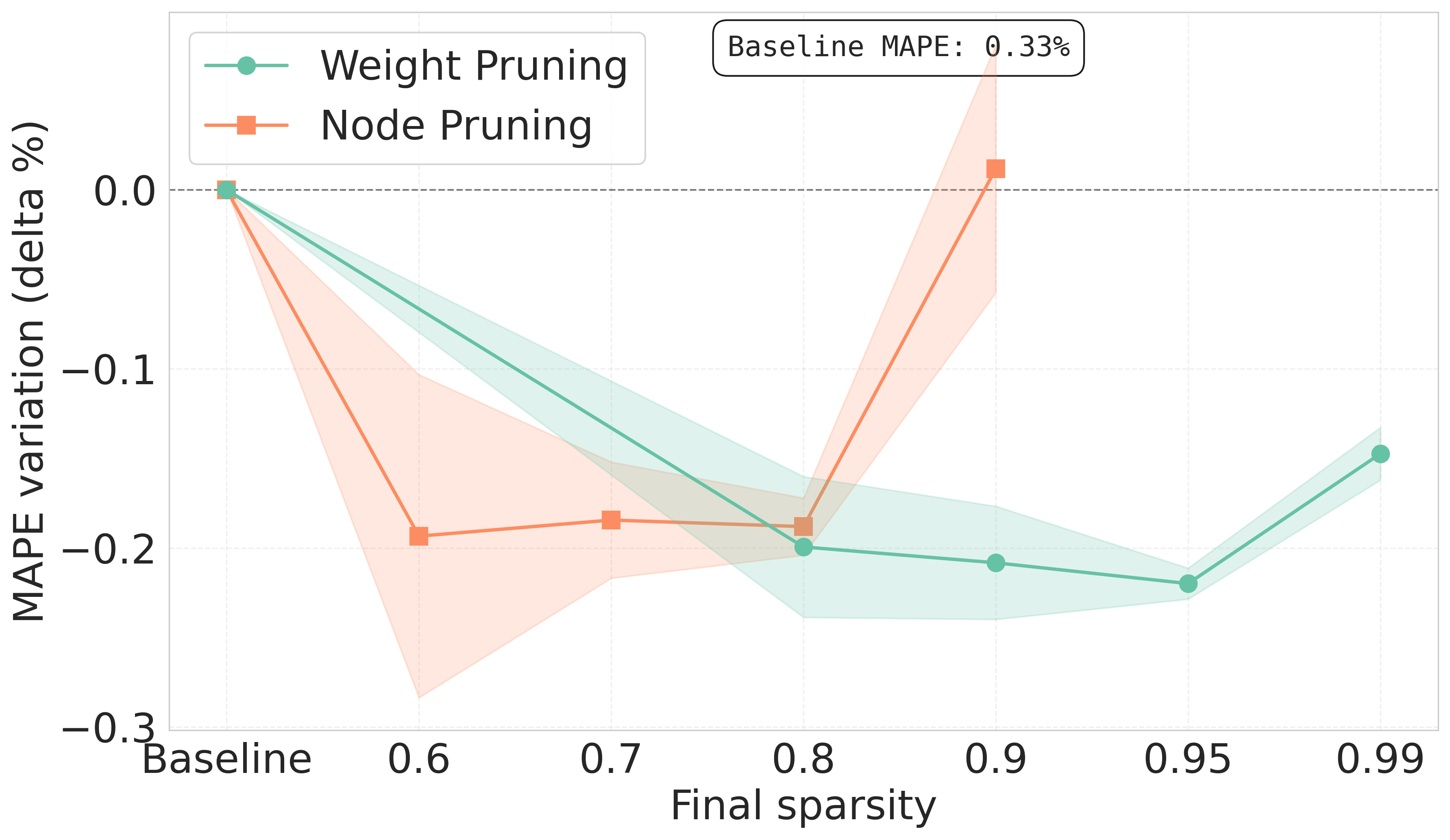}
        \caption{MAPE variation relative to unpruned baseline.}
        \label{fig:reformer_mape_combined}
    \end{subfigure}
    \caption{Performance of pruning on the autothermal reformer flowsheet surrogate. Both weight and node pruning enable convergence within the 7,200-second time limit (left, dashed line) where the unpruned baseline fails, achieving speedups of up to three orders of magnitude. Unlike the peaks function, pruning also improves prediction accuracy (right), with MAPE reductions of approximately 0.20\%, suggesting reduced overfitting.}
\end{figure}

\subsubsection{Pruning tightens bounds in engineering applications}
Figure~\ref{fig:reformer_bounds} presents the bound widths ($U_i - L_i$) for preactivation variables in the flowsheet surrogate.
The unpruned baseline exhibits very wide bound intervals reaching above $10^3$.
As in the illustrative case study, pruning monotonically reduces bound widths up to two orders of magnitude, achieving median values below 1 at 90\%.\\
These results confirm that the bound tightening mechanism observed for the peaks function generalizes to engineering applications with more complex surrogate modeling tasks.
Consistent with the theoretical framework established in Section~\ref{sec:methods}, pruning systematically reduces preactivation bound widths through IA-based bound propagation, leading to tighter big-M coefficients in the MILP formulation, and faster optimization convergence.

\begin{figure}[h!]
    \centering
    \begin{subfigure}{0.48\linewidth}
        \centering
        \includegraphics[width=\linewidth]{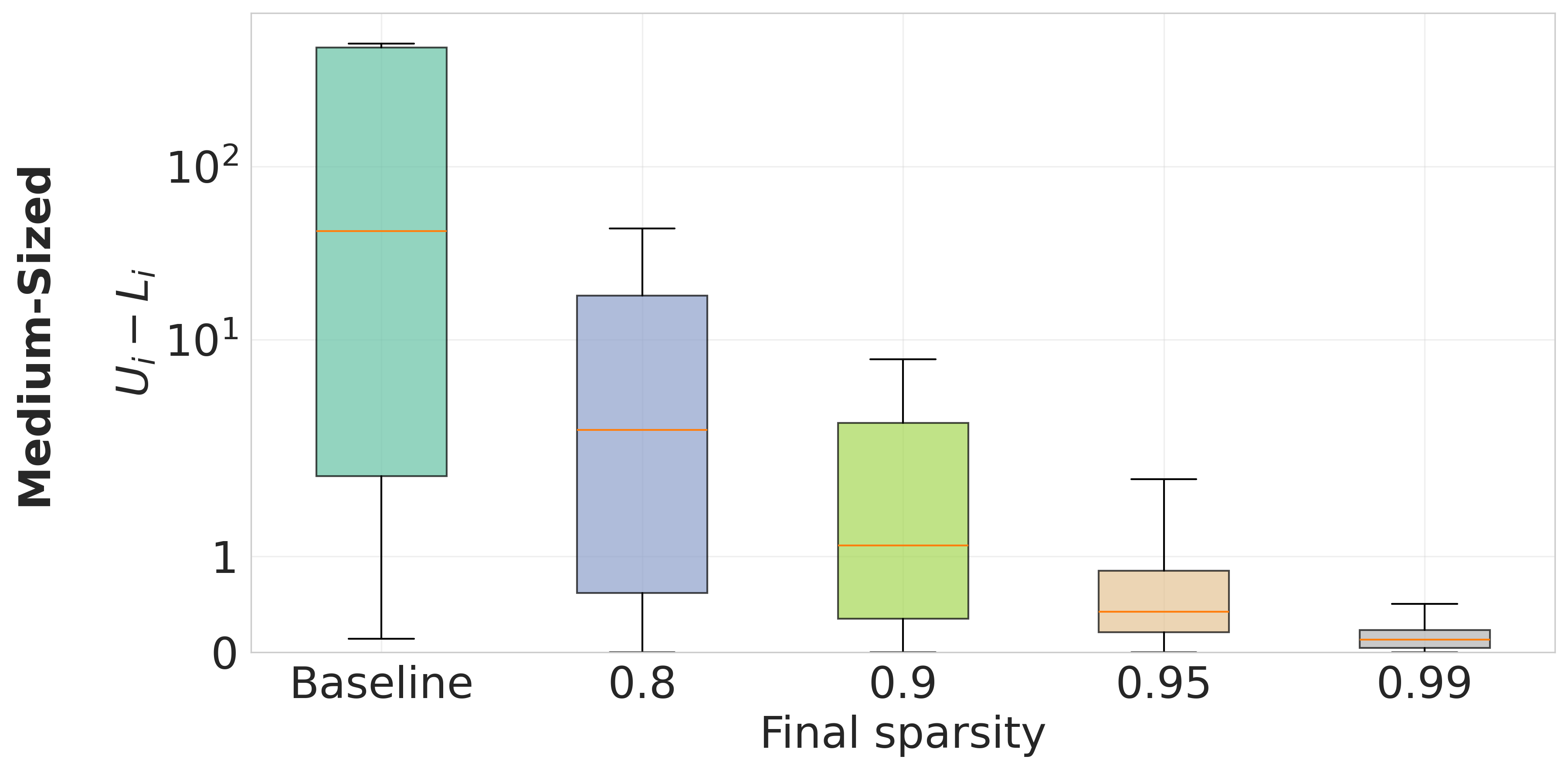}
        \caption{Weight pruning}
        \label{fig:reformer_bounds_weight}
    \end{subfigure}
    \hfill
    \begin{subfigure}{0.48\linewidth}
        \centering
        \includegraphics[width=\linewidth]{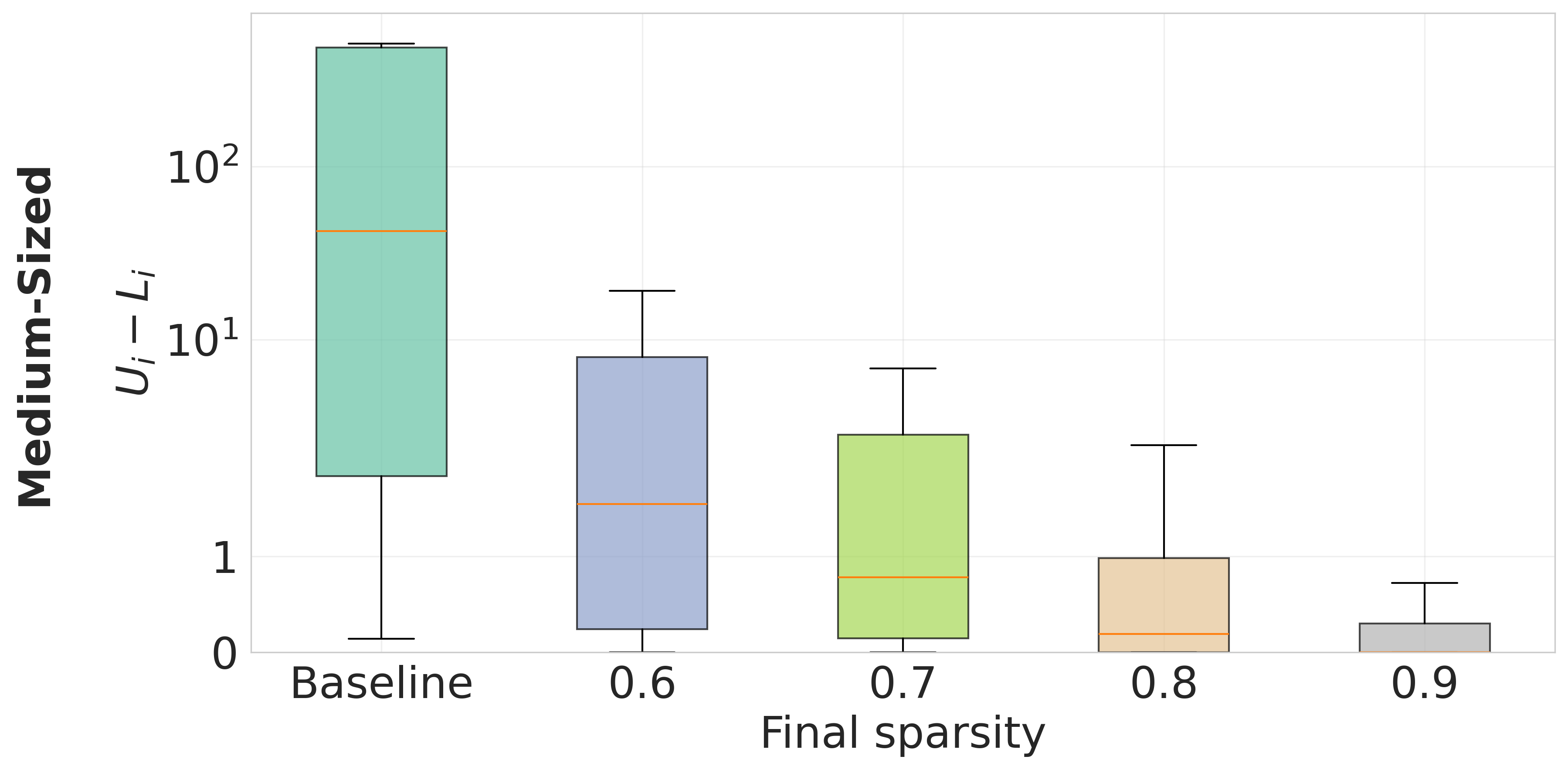}
        \caption{Node pruning}
        \label{fig:reformer_bounds_node}
    \end{subfigure}
    \caption{Preactivation bound widths for the flowsheet surrogate across pruning levels. Both weight (a) and node pruning (b) achieve more than two orders of magnitude reduction in median bound widths, consistent with the theoretical predictions and peaks function results.}
    \label{fig:reformer_bounds}
\end{figure}

\FloatBarrier
\section{Conclusion}
When developing neural network surrogates for optimization tasks, pruning techniques can significantly enhance computational efficiency without sacrificing accuracy.
The main aims of this work were (i) to investigate how different pruning strategies impact the computational performance of deterministic global optimization across various ReLU neural network architectures, (ii) to elucidate the underlying mechanisms driving these improvements, and (iii) to validate the findings empirically, including on a realistic engineering application.
We demonstrated that both weight pruning and node pruning can significantly accelerate the solution of mixed-integer programming formulations of ReLU networks, achieving speedups of up to four orders of magnitude. 
Weight pruning was particularly effective for deep and narrow architectures, while node pruning yielded better performance for shallow and wide or medium-sized networks.
The computational gains were attributed to three key factors: (i) reduction in problem size, (ii) decrease in the number of integer variables, and (iii) tightening of big-M bounds.
These effects collectively strengthen the LP relaxation and decrease the combinatorial complexity of the MILP, enabling faster convergence of the B\&B algorithm.
The generalizability of these findings was confirmed by applying them to a chemical process flowsheet optimization problem, where pruning enabled convergence within seconds for a problem that was otherwise intractable.\\
We recommend pruning as standard practice when developing neural network surrogates for optimization tasks, especially for engineering applications that require repeated optimization solves, where the additional training time for pruning is amortized by substantial gains in online optimization performance.

\section*{Acknowledgments}
We gratefully acknowledge the funding support provided by Shell Global Solutions International B.V.\\
The authors also thank Jing Cui, MSc, for her preliminary work on pruning neural networks for efficient optimization carried out at RWTH Aachen University and Prof. Alexander Mitsos for useful discussions.

\section*{Declaration of generative AI use}
The authors used large language models (including models developed by OpenAI and Anthropic) to assist with editing, grammar refinement, and enhancing the clarity of the manuscript. These tools were also utilized to aid in code generation and debugging. All outputs were carefully reviewed and approved by the authors.

\bibliographystyle{unsrtnat}
\bibliography{references}

\newpage

\FloatBarrier
\appendix
\section*{Appendix}
\section{Model parameters}
This appendix summarizes the dataset characteristics and training hyperparameters used for the neural network models in the case studies considered in this work.
\FloatBarrier
\label{app:trainig_parameters}
\subsection{Peaks function case study}

\begin{table}[htb!]
\centering
\caption{Dataset parameters used in our experiments for the peaks function case study.}
\begin{tabular}{ll}
\toprule
Parameter & Value \\
\midrule
Training + validation samples & 5{,}000 \\
Training/validation split & 0.8 / 0.2 \\
Test data samples & 500 \\
\bottomrule
\end{tabular}
\label{tab:peaks_training_dataset}
\end{table}

\begin{table}[htb!]
\centering
\caption{Training hyperparameters used in our experiments for the peaks function case study.}
\begin{tabular}{ll}
\toprule
Hyperparameter & Value \\
\midrule
Epochs & 50{,}000 \\
Learning rate & $10^{-5}$ \\
Optimizer & Adam \\
Batch size & 1{,}000 \\
L2 regularization factor & $5 \times 10^{-7}$ \\
MAPE early stopping tolerance & $0.001\%$ \\
Early stopping patience & 15 iterations \\
\bottomrule
\end{tabular}
\label{tab:peaks_training_hyperparameters}
\end{table}

\subsection{Flowsheet optimization case study}
\begin{table}[htb!]
\centering
\caption{Inputs of the neural network surrogate for the autothermal reformer flowsheet.}
\begin{tabular}{lc}
\toprule
Variable & Units \\
\midrule
Bypass fraction ($\alpha$) & - \\
Natural gas to steam ratio ($r$) & - \\
\bottomrule
\end{tabular}
\label{tab:nn_surrogate_inputs}
\end{table}

\begin{table}[htb!]
\centering
\caption{Outputs of the neural network surrogate for the autothermal reformer flowsheet.}
\begin{tabular}{lc}
\toprule
Variable & Units \\
\midrule
Steam flow rate ($F_{\text{steam}}$) & mol/s \\
Reformer heat duty ($Q_{\text{ref}}$) & MW \\
$x_{\text{Ar}}$ & - \\
$x_{\text{C}_2\text{H}_6}$ & - \\
$x_{\text{C}_3\text{H}_8}$ & - \\
$x_{\text{C}_4\text{H}_{10}}$ & - \\
$x_{\text{CH}_4}$ & - \\
$x_{\text{CO}}$ & - \\
$x_{\text{CO}_2}$ & - \\
$x_{\text{H}_2}$ & - \\
$x_{\text{H}_2\text{O}}$ & - \\
$x_{\text{N}_2}$ & - \\
$x_{\text{O}_2}$ & - \\
\bottomrule
\end{tabular}
\label{tab:nn_surrogate_outputs}
\end{table}

\begin{table}[htb!]
\centering
\caption{Dataset parameters used for the flowsheet optimization case study.}
\begin{tabular}{ll}
\toprule
Parameter & Value \\
\midrule
Training + validation samples & 2{,}500 \\
Training/validation split & 0.8 / 0.2 \\
Test data samples & 300 \\
\bottomrule
\end{tabular}
\label{tab:reformer_training_dataset}
\end{table}

\begin{table}[htb!]
\centering
\caption{Training hyperparameters used in our experiments for the flowsheet optimization case study.}
\begin{tabular}{ll}
\toprule
Hyperparameter & Value \\
\midrule
Epochs & 60{,}000 \\
Learning rate & $10^{-4}$ \\
Optimizer & Adam \\
Batch size & 500 \\
L2 regularization factor & $5 \times 10^{-7}$ \\
MAPE early stopping tolerance & $0.001\%$ \\
Early stopping patience & 20 iterations \\
\bottomrule
\end{tabular}
\label{tab:reformer_training_hyperparameters}
\end{table}

\FloatBarrier

\section{Additional results}
\label{app:additional_results}
\FloatBarrier

\subsection{Layer-wise bound tightening in deep networks}
Figure~\ref{fig:bounds_deep_NN} provides a detailed layer-wise analysis of the bound tightening effect for the deep and narrow architecture (102$\times$20) under increasing sparsity levels.
In contrast to the averaged results shown in the main text, this figure reports the distribution of big-M widths $U_i^{(j)} - L_i^{(j)}$ across individual layers and neurons.
For the unpruned network, bound widths grow exponentially with depth, showing a strong amplification effect in the deeper layers due to IA; pruning suppresses this explosion and stabilizes the bounds throughout the network.
Under weight pruning, the reduction is particularly pronounced in the middle layers, suggesting that a larger fraction of parameters is effectively eliminated there.
This pattern indicates that the network tends to concentrate information in a lower-dimensional intermediate representation, reminiscent of an encoder--decoder structure.
Such a layer-specific effect is less evident with node pruning, since the same sparsity level is imposed uniformly across layers by design.
\begin{figure}[h!]
    \centering
    \begin{subfigure}{0.48\linewidth}
        \centering
        \includegraphics[width=\linewidth]{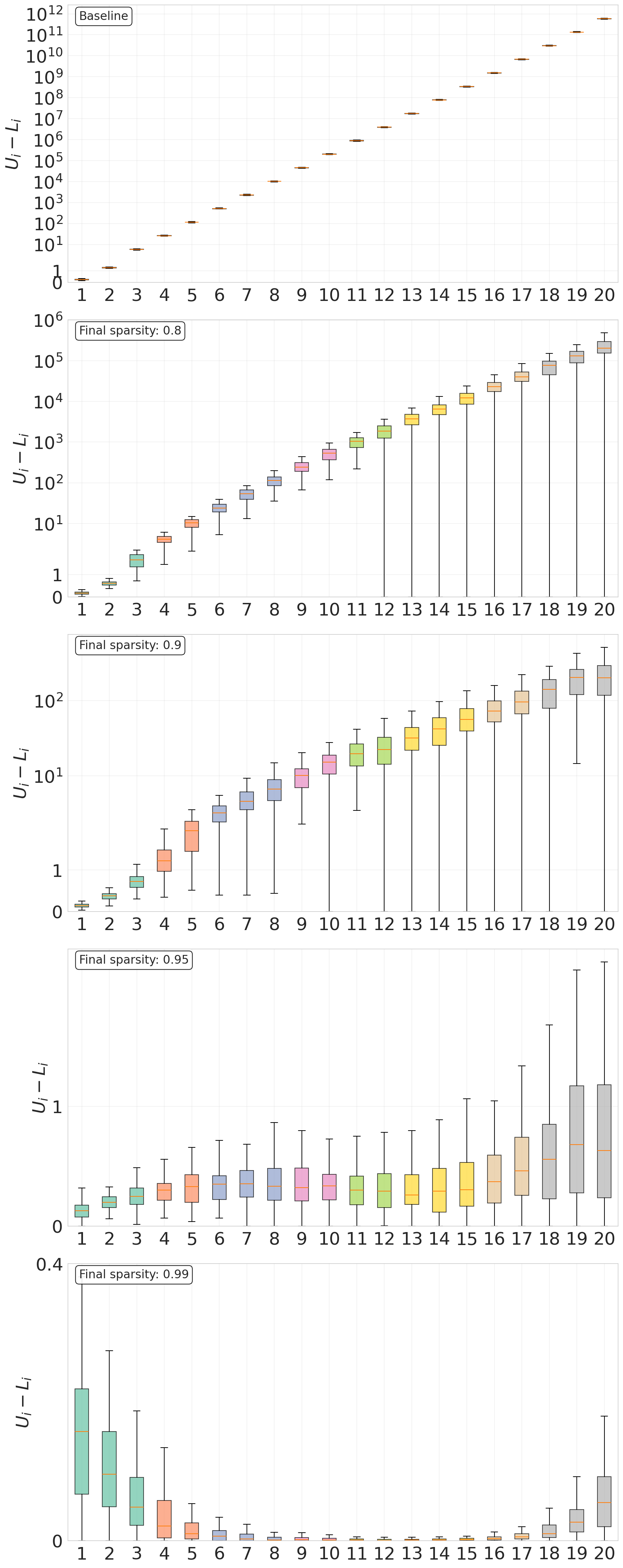}
        \caption{Weight pruning}
        \label{fig:bounds_deep_NN_weight}
    \end{subfigure}
    \hfill
    \begin{subfigure}{0.48\linewidth}
        \centering
        \includegraphics[width=\linewidth]{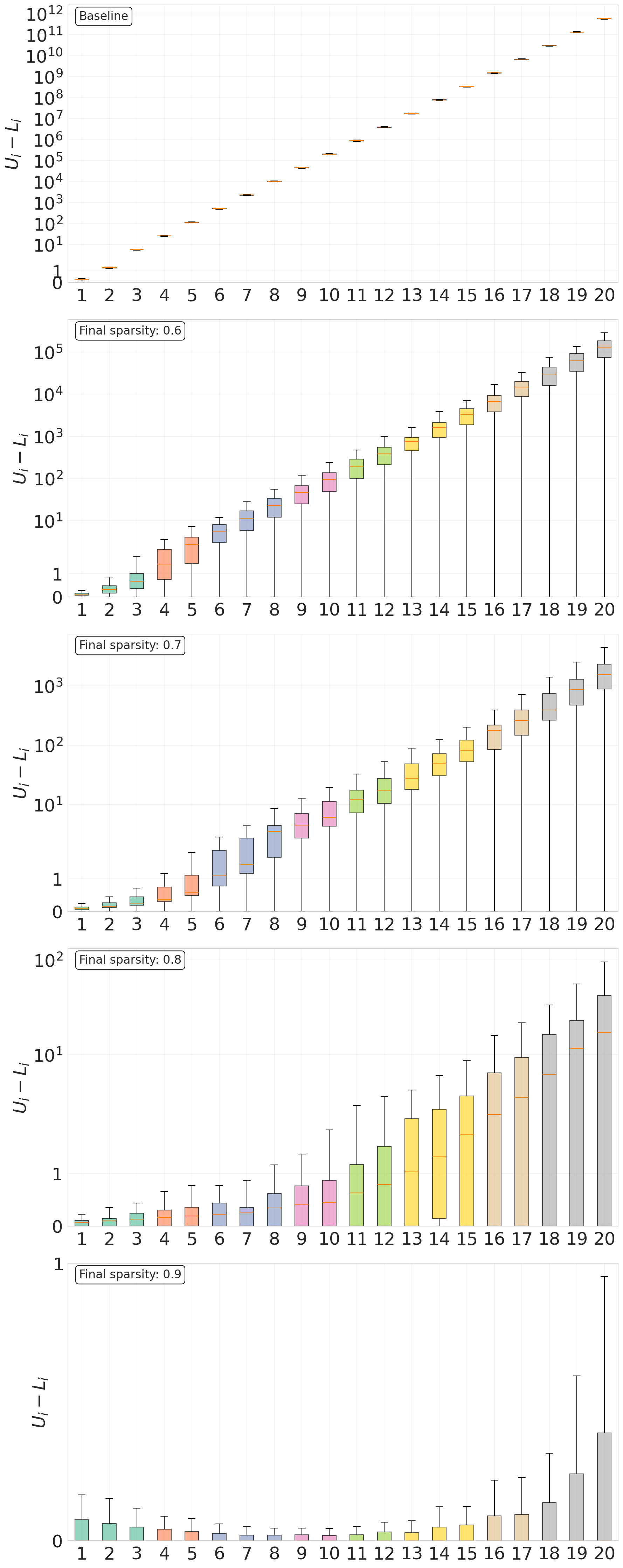}
        \caption{Node pruning}
        \label{fig:bounds_deep_NN_node}
    \end{subfigure}
    \caption{Layer-wise big-M bound widths under increasing sparsity for the deep and narrow architecture (102$\times$20).
    The unpruned network exhibits exponential growth of bound widths with depth due to IA amplification, whereas pruning reduces bounds throughout the network.
    Weight pruning induces particularly strong reductions in the middle layers, suggesting concentration of information in a lower-dimensional intermediate representation (encoder--decoder-like behavior).
    This layer-specific effect less evident under node pruning, which enforces uniform sparsity across layers by design.}
    \label{fig:bounds_deep_NN}
\end{figure}

\end{document}